\definecolor{webgreen}{rgb}{0,.5,0}
\definecolor{webbrown}{rgb}{.6,0,0}
\begin{document}

\theoremstyle{plain}
\newtheorem{theorem}{Theorem}
\newtheorem{corollary}[theorem]{Corollary}
\newtheorem{lemma}[theorem]{Lemma}
\newtheorem{proposition}[theorem]{Proposition}

\theoremstyle{definition}
\newtheorem{definition}[theorem]{Definition}
\newtheorem{example}[theorem]{Example}
\newtheorem{conjecture}[theorem]{Conjecture}
\newtheorem{open}[theorem]{Open Problem}

\theoremstyle{remark}
\newtheorem{remark}{Remark}

\title{Asymptotic bounds for the number of closed and privileged words}

\author{Daniel Gabri\'{c} \\
School of Computer Science \\
University of Guelph \\
Guelph, ON  N1G 2W1 \\
Canada\\
\href{mailto:dgabric@uoguelph.ca}{\tt dgabric@uoguelph.ca} }
\date{}

\maketitle

\begin{abstract}
A word~$w$ has a border $u$ if $u$ is a non-empty proper prefix and suffix of $u$. A word~$w$ is said to be \emph{closed} if $w$ is of length at most $1$ or if $w$ has a border that occurs exactly twice in $w$. A word~$w$ is said to be \emph{privileged} if $w$ is of length at most $1$ or if $w$ has a privileged border that occurs exactly twice in $w$. Let $C_k(n)$ (resp.~$P_k(n)$) be the number of length-$n$ closed (resp. privileged) words over a $k$-letter alphabet. In this paper, we improve existing upper and lower bounds on $C_k(n)$ and $P_k(n)$. We completely resolve the asymptotic behaviour of $C_k(n)$. We also nearly completely resolve the asymptotic behaviour of $P_k(n)$ by giving a family of upper and lower bounds that are separated by a factor that grows arbitrarily slowly.
\end{abstract}

\section{Introduction}


Let $\Sigma_k$ denote the $k$-letter alphabet $\{0,1,\ldots, k-1\}$. Throughout this paper, we denote the length of a word~$w$ as $|w|$. A word~$u$ is said to be a \emph{factor} of a word~$w$ if $w=xuy$ for some words $x$, $y$. A word~$w$ has a \emph{border} $u$ if $u$ is a non-empty proper prefix and suffix of $w$. A word that has a border is said to be \emph{bordered}; otherwise, it is said to be \emph{unbordered}.  A word~$w$ is said to be \emph{closed} if $|w|\leq 1$ or if $w$ has a border that occurs exactly twice in $w$.  If $u$ is a border $w$ and $u$ occurs in $w$ exactly twice, then we say $w$ is \emph{closed by} $u$. It is easy to see that if a word~$w$ is closed by a word~$u$, then $u$ must be the largest border in $w$; otherwise $u$ would occur more than two times in $w$. A word~$w$ is said to be \emph{privileged} if $|w| \leq 1$ or if $w$ is closed by a privileged word.

\begin{example}\label{example:closedprivileged}
The English word {\tt entanglement} has the border {\tt ent} and only contains two occurrences of {\tt ent}. Thus, {\tt entanglement} is a closed word, closed by {\tt ent}. Since $|{\tt ent}|>1$ and {\tt ent} is unbordered and therefore not privileged, we have that {\tt entanglement} is not privileged.

The English word {\tt alfalfa} is closed by {\tt alfa}. Furthermore, {\tt alfa} is closed by {\tt a}. But $|{\tt a}|\leq 1$, so {\tt alfa} is privileged and therefore so is {\tt alfalfa}.

The only border of the English word {\tt eerie} is {\tt e} and {\tt e} appears $3$ times in the word. Thus, {\tt eerie} is neither closed nor privileged. 
\end{example}

Closed words were introduced relatively recently by Fici~\cite{Fici:2011} as a way to classify Trapezoidal and Sturmian words. However, there are multiple equivalent formulations of closed words that have been defined at different times. Closed words are equivalent to codewords in prefix-synchronized codes~\cite{Gilbert:1960,Guibas&Odlyzko:1978}.  Closed words are also equivalent to \emph{periodic-like} words~\cite{Carpi&deLuca:2001}. A \emph{period} of a word $w=w_1w_2\cdots w_n$ is an integer $p\leq n$ such that $w_i = w_{i+p}$ for all $1\leq i \leq n-p$. A length-$n$ word is said to be \emph{periodic} if it has a period of length $\leq n/2$. In applications that require the analysis of long words, like DNA sequence analysis, the smallest period is typically much larger than half the length of the word. Periodic-like words were introduced as a generalization of periodic words that preserve some desirable properties of periodic words.

Privileged words~\cite{Kellendonk&Lenz&Savinien:2013} were introduced as a technical tool related to a problem in dynamical systems and discrete geometry. They were originally defined as a generalization of \emph{rich} words by tweaking the definition of a \emph{complete first return}. A \emph{complete first return} to a word $u$ is a word that starts and ends with $u$, and contains only two occurrences of $u$. A \emph{palindrome} is a word that reads the same forwards and backwards. A word $w$ is said to be \emph{rich} if and only if every palindromic factor of $w$ is a complete first return to a shorter palindrome. Interestingly, rich words contain the maximum possible number of distinct palindromic factors. Privileged words were then defined as an iterated complete first return. A word is privileged if and only if it is a complete first return to a shorter privileged word. Single letters and the empty word are defined to be privileged in order to make this definition meaningful.

 Since their introduction, there has been much research into the properties of closed and privileged words~\cite{Badkobeh&Fici&Liptak:2015,Bucci:2013,DeLuca&Fici&Karhumaki&Lepisto&Zamboni:2013, Fici:2017, Jahannia:2022,Peltomaki:2013,Peltomaki:2015,  Schaeffer&Shallit:2016}. One problem that has received some interest lately~\cite{Forsyth2016, Nicholson2018, Rukavicka2020, Rukavicka:2022} is to find good upper and lower bounds for the number of closed and privileged words.

Let $C_k(n)$ denote the number of length-$n$ closed words over $\Sigma_k$. Let $C_k(n,t)$ denote the number of length-$n$ closed words over $\Sigma_k$ that are closed by a length-$t$ word. Let $P_k(n)$ denote the number of length-$n$ privileged words over $\Sigma_k$. Let $P_k(n,t)$ denote the number of length-$n$ privileged words over $\Sigma_k$ that are closed by a length-$t$ privileged word. See Tables~\ref{table:ClosedTable} and~\ref{table:PrivilegedTable} for sample values of $C_2(n,t)$ and $P_2(n,t)$ for small $n$, $t$. See sequences \href{https://oeis.org/A226452}{\underline{A226452}} and \href{https://oeis.org/A231208}{\underline{A231208}} in the \emph{On-Line Encyclopedia of Integer Sequences}~\cite{OEIS} for sample values of $C_2(n)$ and $P_2(n)$.

Every privileged word is a closed word, so any upper bound on $C_k(n)$ is also an upper bound on $P_k(n)$. Furthermore, any lower bound on $P_k(n)$ is also a lower bound on $C_k(n)$.

\begin{itemize}
\item Forsyth et al.~\cite{Forsyth2016} showed that $P_2(n) \geq 2^{n-5}/n^2$ for all $n\geq n_0$ for some $n_0>0$. 
\item Nicholson and Rampersad~\cite{Nicholson2018} improved and generalized this bound by showing that there are constants $c$ and $n_0$ such that $P_k(n) \geq c\frac{k^n}{n(\log_k(n))^2}$ for all $n \geq n_0$. 
\item Rukavicka~\cite{Rukavicka2020} showed that there is a constant $c$ such that $C_k(n) \leq c\ln n\frac{k^n}{\sqrt{n}}$ for all $n>1$. 
\item Rukavicka~\cite{Rukavicka:2022} also showed that for every $j\geq 3$, there exist constants $\alpha_j$ and $n_j$ such that $P_k(n) \leq \alpha_j\frac{k^n\sqrt{\ln n}}{\sqrt{n}}\ln^{\circ j}(n)\prod\limits_{i=2}^{j-1}\sqrt{\ln^{\circ i}(n)}$ length-$n$ privileged words for all $n\geq n_j$ where $\ln^{\circ 0}(n) = n$ and $\ln^{\circ j}(n) = \ln(\ln^{\circ j-1}(n))$.
\end{itemize}
The best upper and lower bounds for both $C_k(n)$ and $P_k(n)$ are widely separated, and can be much improved. In this paper, we improve the existing upper and lower bounds on $C_k(n)$ and $P_k(n)$. Let $\log_k^{\circ 0}(n)=n$ and $\log_k^{\circ j}(n) = \log_k(\log_k^{\circ j-1}(n))$ for $j\geq 1$. We prove the following two theorems. 
\begin{theorem}\label{theorem:mainC}
Let $k\geq 2$ be an integer. 
\begin{enumerate}[label=(\alph*)]
    \item There exist constants $N$ and $c$ such that $C_k(n)\geq c\frac{k^n}{n}$ for all $n>N$. \label{theorem:mainCLower}
    \item There exist constants $N'$ and $c'$ such that $C_k(n) \leq c'\frac{k^n}{n}$ for all $n>N'$. \label{theorem:mainCUpper}
\end{enumerate}
\end{theorem}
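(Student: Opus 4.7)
For Theorem~\ref{theorem:mainC}(a), the strategy is to exhibit a large family of closed words with a carefully chosen closing-border length. Set $t=\lceil\log_k(2n)\rceil$, so that $k^t\ge 2n$, and consider every word of the form $w=u\,m\,u$ where $u\in\Sigma_k^t$ is unbordered and $m\in\Sigma_k^{n-2t}$ does not contain $u$ as a factor. I would first verify that every such $w$ is closed by $u$: any occurrence of $u$ starting strictly between positions $1$ and $n-t+1$ and overlapping the initial or the terminal copy of $u$ would force $u$ to have a proper non-empty border, contradicting unborderedness, while a fully internal occurrence would place $u$ as a factor of $m$, which is ruled out by hypothesis. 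Distinct pairs $(u,m)$ yield distinct $w$ since $u$ is recovered as the length-$t$ prefix of $w$. Using the classical fact that the number of unbordered length-$t$ words is at least $\alpha k^t$ for an absolute constant $\alpha>0$, together with a union bound over the at most $n$ possible starting positions of $u$ inside $m$, there are at least $k^{n-2t}-n\,k^{n-3t}\ge k^{n-2t}/2$ admissible $m$'s (using $k^t\ge 2n$), and multiplying yields at least $\tfrac{\alpha}{2}k^{n-t}=\Omega(k^n/n)$ distinct closed words.

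For Theorem~\ref{theorem:mainC}(b), write $C_k(n)=\sum_{t=1}^{n-1}C_k(n,t)$. Every length-$n$ word closed by a border of length $t$ has its last $t$ letters equal to its first $t$ letters, so it is determined by its first $n-t$ letters; thus $C_k(n,t)\le k^{n-t}$. With $T=\lceil\log_k n\rceil$, the large-$t$ tail $\sum_{t=T}^{n-1}C_k(n,t)\le\frac{k}{k-1}k^{n-T}=O(k^n/n)$ is already of the desired order, so the task reduces to bounding $\sum_{t<T}C_k(n,t)$, where the trivial estimate overshoots by a factor of order $n/k^t$.

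The plan for the small-$t$ sum is to exploit that the closing border $u$ must occur \emph{exactly} twice in $w$, by establishing a first-return bound of the form $C_k(n,t)\le c\,k^{n-t}\exp(-c'n/k^t)$ for absolute constants $c,c'>0$. Granting this, the change of variables $s=T-t\ge 1$ gives $k^{n-t}\le k(k^n/n)k^{s-1}$ and $n/k^t\ge k^{s-1}$, whence
\[\sum_{t=1}^{T-1}C_k(n,t)\le\frac{ck^{n+1}}{n}\sum_{s\ge 1}k^{s-1}\exp(-c'k^{s-1})=O\!\left(\frac{k^n}{n}\right),\]
since $k^{s-1}\exp(-c'k^{s-1})$ decays super-exponentially. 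The main obstacle is the first-return bound itself: a naive independent-trials heuristic predicts a $(1-k^{-t})^{n-2t}$ rate, but consecutive length-$t$ windows overlap, so independence fails and the true rate is governed by the autocorrelation polynomial of $u$ in the sense of Guibas and Odlyzko. I would either appeal directly to the Guibas--Odlyzko generating-function analysis of prefix-synchronized codes, or prove the required decay by a disjoint-windows / clump-counting argument summed over all $u\in\Sigma_k^t$, where averaging over $u$ smooths the autocorrelation corrections and restores uniform exponential decay with rate $\Theta(1/k^t)$.
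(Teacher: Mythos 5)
Your part~(a) is correct and takes a genuinely different route from the paper. The paper fixes $t=\lfloor \log_k(n-t)+\log_k(k-1)-\log_k(\ln k)\rfloor$ and invokes the Nicholson--Rampersad estimate $B_k(n,u)\geq d\,k^n/n^2$ for \emph{every} length-$t$ word $u$ (Lemma~\ref{lemma:nich&ramp}), then sums over all $k^t$ borders; you instead restrict to unbordered borders $u$ and count the words $w=umu$ directly, with a union bound over the positions of $u$ inside $m$. Your verification that such $w$ is closed by $u$ is sound (an occurrence overlapping either end copy forces a nontrivial border of $u$; an internal one sits inside $m$), and the arithmetic with $t=\lceil\log_k(2n)\rceil$ gives $\Omega(k^n/n)$ as claimed. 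The one external input is the classical fact that a positive proportion of length-$t$ words are unbordered; that is true but needs a citation (e.g.\ Nielsen), since the naive union bound over border lengths fails for $k=2$. Your approach is more self-contained than the paper's (it avoids the correlation machinery behind Lemma~\ref{lemma:nich&ramp}); the paper's applies to all borders, not just unbordered ones.

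Part~(b) has a genuine gap, and it sits exactly where you flag ``the main obstacle.'' The estimate $C_k(n,t)\leq c\,k^{n-t}\exp(-c'n/k^t)$ for $t\lesssim\log_k n$ is the entire content of the theorem: your tail bound $C_k(n,t)\leq k^{n-t}$ for $t\geq\lceil\log_k n\rceil$ and the summation over $s=T-t$ \emph{granting} the estimate are both fine, but the estimate itself is only asserted with a plan. Moreover, the elementary fallback you sketch (disjoint windows) provably does not deliver the rate you need: blocking the $n-2t$ middle positions into windows of length $t$ or $2t$ gives only
\[A_k(m,u)\;\leq\;k^m\bigl(1-k^{-t}\bigr)^{\lfloor m/t\rfloor}\;\leq\;k^m\exp\!\bigl(-m/(t\,k^{t})\bigr),\]
i.e.\ rate $n/(t\,k^t)$ rather than $n/k^t$. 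Feeding $\exp(-c'k^{s-1}/(T-s))$ into your own sum makes the terms of size $\approx k^{s-1}$ for all $s\lesssim\log_k\log_k n$, so the small-$t$ part is only $O(k^n\log_k n/n)$ --- no better than previously known bounds. Recovering the lost factor $t$ is precisely the correlation-aware step, and citing Guibas--Odlyzko asymptotics for fixed $u$ is not enough as stated, because here $t$ grows with $n$ and you need bounds uniform in $t$. The paper closes this gap in two steps: the comparison $B_k(n,u)\leq A_k(n-2t,u)\leq A_k(n-2t,0^t)$ (Lemma~\ref{lemma:BboundA}, using the Guibas--Odlyzko correlation comparison, with $0^t$ maximizing the correlation polynomial), and then an explicit inductive bound $A_k(m,0^t)\leq\bigl(k-(k-1)k^{-t-1}\bigr)^m$ valid for all $m\geq 2t$ (Lemmas~\ref{lemma:baseCaseUpper} and~\ref{lemma:upperBoundA0}), which yields exactly your claimed decay since $\bigl(1-(k-1)k^{-t-2}\bigr)^{n-2t}\leq\exp(-c'n/k^t)$ in the relevant range. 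To complete your write-up you would need an argument of this strength, or carry out the averaged clump-counting carefully enough to establish occurrence rate $\Theta(n/k^t)$, not $\Theta(n/(t\,k^t))$.
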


\begin{theorem}\label{theorem:mainP}
Let $k\geq 2$ be an integer. 
\begin{enumerate}[label=(\alph*)]
    \item For all $j\geq 0$ there exist constants $N_j$ and $c_j$ such that \[P_k(n)\geq c_j\frac{k^n}{n\log_k^{\circ j}(n)\prod_{i=1}^j\log_k^{\circ i}(n)}\] for all $n>N_j$. \label{theorem:mainPLower}
    \item For all $j\geq 0$ there exist constants $N_j'$ and $c_j'$ such that \[P_k(n) \leq c_j'\frac{k^n}{n\prod_{i=1}^j\log_k^{\circ i}(n)}\]
    for all $n>N_j'$.\label{theorem:mainPUpper}
\end{enumerate}
\end{theorem}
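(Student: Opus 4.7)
The plan is to prove Theorem~\ref{theorem:mainP} by induction on $j$, exploiting the recursive definition of privileged words. For the base case $j=0$, the upper bound $P_k(n)\le c'_0 k^n/n$ is immediate from Theorem~\ref{theorem:mainC}\ref{theorem:mainCUpper}, since every privileged word is closed, and the lower bound $P_k(n)\ge c_0 k^n/n^2$ is Nicholson and Rampersad's result~\cite{Nicholson2018}. In each induction step I use that a length-$n$ privileged word closed by a length-$t$ privileged word $u$ factors uniquely as $umu$ with $|m|=n-2t$, and that $u$ must not appear as a factor of $umu$ outside the two boundary positions.

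For the upper bound induction step, I would write $P_k(n)=\sum_t P_k(n,t)$ and bound each summand by $P_k(t)$ times an upper estimate on the number of length-$(n-2t)$ middles $m$ that avoid $u$ as a factor. For \emph{typical} privileged $u$ of length $t$ (those with all borders of length at most $t/2$, which constitute all but $O(k^{t/2})=o(P_k(t))$ of the privileged words of length $t$), a Guibas--Odlyzko-style correlation-polynomial argument yields at most $k^{n-2t}\exp(-\Theta(n/k^t))$ admissible middles; the atypical $u$ contribute $O(k^{n-3t/2})$ in total, which is negligible compared to the target. Combining with the inductive hypothesis $P_k(t)\le c'_{j-1}k^t/(t\prod_{i=1}^{j-1}\log_k^{\circ i}t)$ and changing variables to $s=t-\log_k n$ gives
\[
P_k(n)\ \le\ C\cdot c'_{j-1}\,\frac{k^n}{n\log_k n\,\prod_{i=2}^{j}\log_k^{\circ i} n}\,\sum_{s\in\mathbb{Z}}\frac{e^{-\Theta(k^{-s})}}{k^s}+(\text{negligible}),
\]
where the inner series converges absolutely (geometrically as $s\to+\infty$ and super-exponentially via the Gaussian tail as $s\to-\infty$). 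The identity $\log_k^{\circ i}(\log_k n)=\log_k^{\circ(i+1)}n$ then rewrites this as the required $O(k^n/(n\prod_{i=1}^{j}\log_k^{\circ i}n))$.

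For the lower bound induction step, I would fix the single value $t:=\lceil\log_k n\rceil+c$ for a constant $c$ with $k^c\ge 4$. Again, the privileged words of length $t$ whose longest border exceeds $t/2$ are periodic with period $<t/2$, so they number at most $\sum_{p<t/2}k^p=O(k^{t/2})=o(P_k(t))$; for each remaining privileged $u$ all borders have length at most $t/2$, and a union bound over interior and boundary-straddling occurrences of $u$ yields
\[
\#\{m:umu\text{ is closed by }u\}\ \ge\ k^{n-2t}-(n-3t)k^{n-3t}-t\cdot k^{n-5t/2}\ \ge\ \tfrac12\,k^{n-2t}
\]
for $n$ sufficiently large. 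Multiplying by the inductive $P_k(t)\ge c_{j-1}k^t/(t\log_k^{\circ(j-1)}t\prod_{i=1}^{j-1}\log_k^{\circ i}t)$ and simplifying via $\log_k^{\circ(j-1)}(\log_k n)=\log_k^{\circ j}n$ produces $P_k(n)\ge c_j k^n/(n\log_k^{\circ j}n\prod_{i=1}^{j}\log_k^{\circ i}n)$; the extra factor $\log_k^{\circ j}n$ compared to the upper bound is exactly the price of using a single border length $t$ instead of summing over a window of widths comparable to~$\log_k^{\circ 2}n$.

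The main obstacle will be producing the avoidance estimate needed for the upper bound with exponential decay rate $n/k^t$ rather than the weaker $n/(tk^t)$ that the naive disjoint-block argument yields; the missing factor of $t=\Theta(\log_k n)$ in the exponent would cost one full $\log_k n$ in the final denominator and so cannot be absorbed. A careful treatment of the correlation polynomials of typical privileged $u$ (or an averaging argument over borders of length $t$) should recover the generic rate. Once this is done, the remaining bookkeeping is routine: both $c_j$ and $c'_j$ satisfy recursions of the form $c_j=\alpha c_{j-1}$ with $\alpha$ a positive constant depending only on $k$, so the induction closes at every level.
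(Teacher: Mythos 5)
Your lower-bound half is essentially sound, and it takes a more hands-on route than the paper does: where the paper's induction step simply applies Lemma~\ref{lemma:nich&ramp} uniformly to every length-$t$ privileged prefix $u$ (so that $P_k(n)\geq d\,P_k(t)\,k^n/n^2$ with $t\approx\log_k n$), you discard the $O(k^{t/2})$ privileged words of length $t$ having a border longer than $t/2$ and count admissible middles $m$ for the remaining $u$ by a direct union bound over interior and boundary-straddling occurrences. That count, $k^{n-2t}-(n-3t)k^{n-3t}-O(t)k^{n-5t/2}\geq\tfrac12 k^{n-2t}$ for $k^t\geq 4n$, is correct, and combined with the inductive bound on $P_k(t)$ it reproduces the paper's conclusion, including the doubled $\log_k^{\circ j}$ factor inherited from the $k^n/n^2$ base case.

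The upper-bound half, however, has a genuine gap, and you flag it yourself: everything hinges on the avoidance estimate ``at most $k^{n-2t}\exp(-\Theta(n/k^t))$ middles avoid $u$,'' and you only assert that a careful correlation-polynomial treatment of \emph{typical} $u$ ``should'' recover this rate. The paper closes precisely this hole, and with no typicality split at all: by the Guibas--Odlyzko comparison (Lemma~\ref{lemma:BboundA}), $B_k(n,u)\leq A_k(n-2t,u)\leq A_k(n-2t,0^t)$ for \emph{every} length-$t$ word $u$, since $1^t$ is the maximal autocorrelation; and the recurrence $A_k(m,0^t)=(k-1)\sum_{i=1}^{t}A_k(m-i,0^t)$ of Lemma~\ref{lemma:A0t} yields, by elementary induction (Lemmas~\ref{theorem:betaUpper} and~\ref{lemma:upperBoundA0}), $A_k(m,0^t)\leq\beta_k(t)^m$ with $\beta_k(t)=k-(k-1)k^{-t-1}$, i.e.\ decay $\exp\bigl(-(k-1)m/k^{t+2}\bigr)$ --- the generic rate, uniformly in $u$, even for the worst word $0^t$. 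Your typical/atypical split is therefore unnecessary, and as written it is also unsound: bounding each atypical $u$ by the trivial $k^{n-2t}$ middles gives a contribution of order $k^{n-3t/2}$ at each fixed $t$, which for bounded $t$ (already the single privileged word $u=0^3$ at $t=3$ contributes up to $k^{n-6}$) is a constant fraction of $k^n$ and swamps the target $k^n/(n\prod_{i=1}^{j}\log_k^{\circ i}(n))$; so ``negligible compared to the target'' fails exactly where a nontrivial avoidance bound for highly periodic words is indispensable. A smaller point: the inductive hypothesis on $P_k(t)$ holds only for $t$ above a threshold, so the small-$t$ terms of $\sum_t P_k(t)A_k(n-2t,0^t)$ need the separate treatment the paper gives them. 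Once the uniform $0^t$ bound is in place, your change of variables $s=t-\log_k n$, the convergent series, and the identification $\log_k^{\circ i}(\log_k n)=\log_k^{\circ(i+1)}(n)$ (made rigorous by Lemma~\ref{lemma:logLimit}) go through essentially as in the paper.
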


Before we proceed, we give a heuristic argument as to why $C_k(n)$ is in $\Theta(\frac{k^n}{n})$. Consider a ``random" length-$n$ word~$w$. Let $\ell = \log_k(n) + c$ where $c$ is a constant such that $\ell$ is a positive integer. There is a $\frac{1}{k^{\ell}}=\frac{1}{k^cn}$ chance that $w$ has a length-$\ell$ border. Suppose $w$ has a length-$\ell$ border. Now suppose we drop the first and last character of $w$ to get $w'$. If $w'$ were randomly chosen (which it is not), then we could use the linearity of expectation to get that the expected number of occurrences of $u$ in $w'$ is approximately $(n-1-\ell)k^{-\ell} \approx k^{-c}$. Thus, for $c$ large enough we have that $u$ does not occur in $w'$ with high probability, and so $w$ is closed. Therefore, there are approximately $k^{n-\ell} \in \Theta(\frac{k^n}{n})$ length-$n$ closed words.

\begin{table}[H]

\centering
\begin{tabular}{|c|cccccccccc|}
\hline
\backslashbox{$n$}{$t$}  & $1$ & $2$ & $3$ & $4$ & $5$ & $6$ & $7$ & $8$ & $9$ & $10$  \\
\hline
10 &  2 & 30 & 70 &  50 & 30    & 12  & 6  & 2  & 2  &   0 \\
11 &  2 & 42 & 118 & 96 & 54   & 30  & 13  & 6  & 2  &  2  \\
12 &  2 & 60 & 200 & 182 & 114  & 54  & 30  & 12  &  6 & 2    \\
13 & 2 & 88 & 338 & 346 & 214   & 126  & 54  & 30   & 12  & 6   \\
14 &  2 & 132 & 570 & 640 &  432  & 232  & 126  & 54  & 30  &   12 \\
15 &  2 & 202 & 962 & 1192 & 828   & 474  & 240  & 126  & 54  & 30    \\
16 &  2 & 314 & 1626  & 2220 & 1612    & 908  & 492  & 240  & 126  &  54 \\
17 &  2 & 494 & 2754 & 4128 &  3112  & 1822  & 956  & 504  & 240  & 126    \\
18 &  2  & 784  & 4676  & 7670  & 6024    & 3596  & 1934  & 982  & 504  & 240    \\
19 &  2 & 1252 & 7960 & 14264 & 11636  & 7084  & 3828  & 1992  & 990  & 504   \\
20 &  2 & 2008 & 13588 &  26524 & 22512  &  13928  & 7632  &  3946  & 2026  &  990  \\
\hline
\end{tabular}
\captionsetup{justification=centering}
\caption{Some values of $C_2(n,t)$ for $n$, $t$ where $10 \leq n \leq 20$ and $1\leq t \leq 10$.}
\label{table:ClosedTable}
\end{table}

\begin{table}[H]
\centering
\begin{tabular}{|c|cccccccccc|}
\hline
\backslashbox{$n$}{$t$}  & $1$ & $2$ & $3$ & $4$ & $5$ & $6$ & $7$ & $8$ & $9$ & $10$ \\
\hline
10  & 2 & 16 & 22 & 8  & 6   & 2  & 2  & 0   &  2  & 0    \\
11  & 2  & 26 & 38 & 16 & 10   & 6  & 4  & 2  & 2  &   2  \\
12  & 2 & 42 & 68 & 30 & 18   & 4  & 6  & 2  &  2 &   0   \\
13  & 2 & 68  & 122 & 58 & 38  &  14 & 10  & 6  &  4 &   2  \\
14  & 2 & 110 & 218 & 108 & 76  & 20  & 14  &  8 &  6  &  2  \\
15  & 2 & 178 & 390 & 204 & 148   & 46  & 24  & 18  & 14  & 6   \\
16  & 2 & 288 & 698 & 384 & 288  & 86  &  48 & 16  &  18  &  8  \\
17  & 2 & 466 & 1250 & 724 & 556   & 178  & 92  & 36  &  32  & 26     \\
18  & 2 & 754 & 2240 & 1364 & 1076    & 344  & 190  & 64  &  36  & 28    \\
19 & 2 & 1220 & 4016 & 2572  & 2092   & 688  & 388  & 136  &  70  &  56   \\
20 & 2 & 1974 & 7204 & 4850  & 4068   &  1342 & 772  &  268 &  138  &   52  \\
\hline
\end{tabular}
\captionsetup{justification=centering}
\caption{Some values of $P_2(n,t)$ for $n$, $t$ where $10 \leq n \leq 20$ and $1\leq t \leq 10$.}
\label{table:PrivilegedTable}
\end{table}

\section{Preliminary results}
In this section we give some necessary results and definitions in order to prove our main results. Also throughout this paper, we use $c$'s, $d$'s, and $N$'s to denote positive real constants (dependent on $k$).

Let $w$ be a length-$n$ word. Suppose $w$ is closed by a length-$t$ word~$u$. Since $u$ is also the largest border of $w$, it follows that $w$ cannot be closed by another word. This implies that \[C_k(n) = \sum_{i=1}^{n-1}C_k(n,t)\text{ and }P_k(n) = \sum_{i=1}^{n-1}P_k(n,t)\] for $n>1$.

Let $B_k(n,u)$ denote the number of length-$n$ words over $\Sigma_k$ that are closed by the word~$u$. Let $A_k(n,u)$ denote the number of length-$n$ words over $\Sigma_k$ that do not contain the word~$u$ as a factor.


The \emph{auto-correlation}~\cite{Guibas&Odlyzko:1978,Guibas&Odlyzko:1980, Guibas&Odlyzko:1981} of a length-$t$ word~$u$ is a length-$t$ binary word~$a(u)=a_1a_2\cdots a_t$ where $a_i=1$ if and only if $u$ has a border of length $t-i+1$. The \emph{auto-correlation polynomial} $f_{a(u)}(z)$ of $a(u)$ is defined as 
\[f_{a(u)}(z) = \sum_{i=0}^{t-1}a_{t-i}z^i.\]

For example, the word~$u={\tt entente}$ has auto-correlation $a(u) = 1001001$ and auto-correlation polynomial $f_{a(u)}(z) = z^6+z^3+1$.

We now prove two technical lemmas that will be used in the proofs of Theorem~\ref{theorem:mainC}~\ref{theorem:mainCUpper} and Theorem~\ref{theorem:mainP}~\ref{theorem:mainPUpper}.
\begin{lemma}
\label{lemma:binomial}
Let $k,t\geq  2$ be integers, and let $\gamma$ be a real number such that $0< \gamma \leq \frac{6}{t}$. Then
\[k^t - \gamma tk^{t-1} \leq (k-\gamma)^t \leq k^t - \gamma t k^{t-1} + \frac{1}{2}\gamma^2 t(t-1) k^{t-2}.\]
\end{lemma}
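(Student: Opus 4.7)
The plan is to expand the middle quantity via the binomial theorem,
\[(k-\gamma)^t = \sum_{i=0}^{t}(-1)^i\binom{t}{i}\gamma^i k^{t-i} = \sum_{i=0}^{t} T_i,\]
where $T_i := (-1)^i\binom{t}{i}\gamma^i k^{t-i}$, and to observe that the two desired inequalities line up with the initial partial sums of this series: $T_0+T_1 = k^t - \gamma t k^{t-1}$, and $T_0+T_1+T_2 = k^t - \gamma t k^{t-1} + \frac{1}{2}\gamma^2 t(t-1)k^{t-2}$. So the lower bound reduces to showing $\sum_{i\geq 2}T_i \geq 0$ and the upper bound reduces to showing $\sum_{i\geq 3}T_i \leq 0$.

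The key estimate is that $|T_i|$ is non-increasing starting at $i=2$. Since
\[\frac{|T_{i+1}|}{|T_i|} = \frac{(t-i)\gamma}{(i+1)k},\]
the hypothesis $\gamma \leq 6/t$ yields $(t-i)\gamma \leq 6$, and for $i \geq 2$ with $k \geq 2$ we have $(i+1)k \geq 6$. Hence the ratio is at most $1$ for every $i \geq 2$. This is where the specific constant $6/t$ is used in an essentially tight way; the boundary case is $i=2$, $k=2$.

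The signs of the $T_i$ strictly alternate (the sign of $T_i$ is $(-1)^i$), so once the magnitudes are non-increasing from $i=2$ onward, the standard pairing argument for alternating series finishes both directions. For the lower bound, group $\sum_{i\geq 2}T_i = (T_2+T_3) + (T_4+T_5) + \cdots$; each bracket is non-negative because $|T_{2j}| \geq |T_{2j+1}|$, and any unpaired trailing term $T_t$ (which arises exactly when $t$ is even) has sign $(-1)^t = +1$, so the total is $\geq 0$. For the upper bound, group $\sum_{i\geq 3}T_i = (T_3+T_4) + (T_5+T_6) + \cdots$; each bracket is non-positive because $|T_{2j+1}| \geq |T_{2j+2}|$, and any unpaired trailing $T_t$ (arising when $t$ is odd) has sign $(-1)^t = -1$, so the total is $\leq 0$. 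There is no serious obstacle here: the argument is a routine manipulation of a finite alternating sum, and the only bookkeeping required is to keep track of the parity of $t$ in the pairing step.
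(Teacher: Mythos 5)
Your proof is correct and follows essentially the same route as the paper: expand $(k-\gamma)^t$ by the binomial theorem, show the term magnitudes are non-increasing from index $i=2$ onward using $\gamma\leq 6/t$, and pair consecutive terms of the alternating sum (handling the parity of $t$). Your derivation of the ratio bound, $(t-i)\gamma\leq 6\leq (i+1)k$ for $i\geq 2$, $k\geq 2$, is a slightly more direct version of the paper's algebraic manipulation, but the underlying inequality $k\binom{t}{i}\geq\gamma\binom{t}{i+1}$ is identical.
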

\begin{proof}
 The case when $k=2$ was proved in a paper by Forsyth et al.~\cite[Lemma 9]{Forsyth2016}. We generalize their proof to $k\geq 3$.
 
 When $t=2$, we have $k^2 - 2k\gamma \leq  (k-\gamma)^2 \leq k^2 - 2k\gamma+\gamma^2 $. So suppose $t\geq 3$.  By the binomial theorem, we have 
 \begin{align}
     (k-\gamma)^t &= \sum_{i=0}^t k^{t-i}(-\gamma)^i{t \choose i} = k^t - \gamma t k^{t-1} + \sum_{i=2}^t k^{t-i}(-\gamma)^i{t \choose i} \nonumber \\
                  &\geq  k^t - \gamma t k^{t-1} + \sum_{j=1}^{\lfloor(t-1)/2\rfloor} \bigg(k^{t-2j}\gamma^{2j}{t \choose 2j} - k^{t-2j-1}\gamma^{2j+1}{t\choose 2j+1}\bigg).  \nonumber
 \end{align}
 So to show that $k^t - \gamma tk^{t-1} \leq (k-\gamma)^t$, it is sufficient to show that 
 \begin{equation}
     k^{t-2j}\gamma^{2j}{t \choose 2j} \geq k^{t-2j-1}\gamma^{2j+1}{t\choose 2j+1}
     \label{foo3}
 \end{equation}
 for $1\leq j \leq \lfloor (t-1)/2\rfloor \leq (t-1)/2$.
 
 By assumption we have that $\gamma \leq \frac{6}{t}$, so $\gamma \leq \frac{6}{t-2}$ and thus $\gamma t - 2\gamma \leq 6$. Adding $2\gamma -2$ to both sides we get $\gamma t -2 \leq 4+2\gamma$, and so $\frac{\gamma t - 2 }{\gamma +2}\leq 2$. If $i\geq 2 \geq \frac{\gamma t - 2 }{\gamma +2}$, then $(\gamma +2)i \geq \gamma t - 2$. This implies that $2(i+1) \geq \gamma(t-i)$, and \[\frac{k}{\gamma} \geq \frac{2}{\gamma} \geq \frac{t-i}{i+1} = \frac{{t \choose i+1}}{{t \choose i}}.\]
 Therefore letting $i=2j$, we have that $k{t\choose 2j} \geq \gamma {t\choose 2j+1}$. Multiplying both sides by $k^{t-2j-1}\gamma^{2j}$ we get $k^{t-2j}\gamma^{2j}{t\choose 2j} \geq k^{t-2j-1}\gamma^{2j+1}{t\choose 2j+1}$, which proves
 \eqref{foo3}.

 Now we prove that $(k-\gamma)^t \leq k^t - \gamma t k^{t-1} + \frac{1}{2}\gamma^2 t(t-1) k^{t-2}$. Going back to the binomial expansion of $(k-\gamma)^t$, we have 
 \begin{align}
     (k-\gamma)^t &= k^t - \gamma t k^{t-1} +\frac{1}{2}\gamma^2 t(t-1) k^{t-2}+\sum_{i=3}^t  k^{t-i}(-\gamma)^i{t \choose i}\nonumber \\
     &\leq k^t - \gamma t k^{t-1} + \frac{1}{2}\gamma^2 t(t-1) k^{t-2} \nonumber \\
     &- \sum_{j=1}^{\lfloor(t-2)/2\rfloor} \bigg(k^{t-2j-1}\gamma^{2j+1}{t \choose 2j+1} - k^{t-2j-2}\gamma^{2j+2}{t\choose 2j+2}\bigg).\nonumber 
 \end{align}
So to show that $(k-\gamma)^t \leq k^t - \gamma t k^{t-1} + \frac{1}{2}\gamma^2 t(t-1) k^{t-2}$, it is sufficient to show that \[k^{t-2j-1}\gamma^{2j+1}{t \choose 2j+1} \geq k^{t-2j-2}\gamma^{2j+2}{t\choose 2j+1}\] for $1\leq j \leq \lfloor (t-2)/2\rfloor$. But we have already proved that $k{t \choose i} \geq \gamma {t\choose i+1}$. Letting $i=2j$, we have that $k{t \choose 2j+1} \geq \gamma {t\choose 2j+2}$. Multiplying both sides by $k^{t-2j-2}\gamma^{2j+1}$ we get $k^{t-2j-1}\gamma^{2j+1}{t \choose 2j+1} \geq k^{t-2j-2}\gamma^{2j+2}{t\choose 2j+2}$. 

\end{proof}

\begin{lemma}\label{lemma:logLimit}
Let $i\geq 1$ and $k\geq 2$ be integers. Then for any constant $\gamma > 0$, we have \[\lim_{n\to \infty} \frac{\log_k^{\circ i}( n^\gamma)}{\log_k^{\circ i}(n)}=\begin{cases} 
      \gamma , & \text{if $i=1$;} \\
      1, & \text{if $i>1$.}
   \end{cases}\]
\end{lemma}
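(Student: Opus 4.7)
The plan is to handle the base case $i=1$ by direct computation, and then proceed by a short induction on $i$ for $i \geq 2$, where the key observation is that $\log_k^{\circ i}(n^\gamma)$ and $\log_k^{\circ i}(n)$ differ only by a term that is $O(1)$ (and in fact $o(1)$ for $i \geq 3$), while their common order of magnitude tends to infinity.

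For $i=1$ the claim is immediate: $\log_k(n^\gamma) = \gamma \log_k(n)$, so the ratio is identically equal to $\gamma$. For $i=2$, applying $\log_k$ once more gives
\[\log_k^{\circ 2}(n^\gamma) = \log_k(\gamma \log_k(n)) = \log_k(\gamma) + \log_k^{\circ 2}(n),\]
so the difference $\log_k^{\circ 2}(n^\gamma) - \log_k^{\circ 2}(n) = \log_k(\gamma)$ is a constant. Dividing by $\log_k^{\circ 2}(n) \to \infty$ gives the ratio $1 + \log_k(\gamma)/\log_k^{\circ 2}(n) \to 1$.

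For $i \geq 3$ I would proceed by induction, with inductive hypothesis that the difference $D_i(n) := \log_k^{\circ i}(n^\gamma) - \log_k^{\circ i}(n)$ tends to $0$ (this already holds trivially for $i \geq 3$ from the $i=2$ case, since we are about to take further logarithms of a quantity that differs by a bounded constant from a quantity going to infinity). Concretely, set $x = \log_k^{\circ i-1}(n)$, which tends to infinity, and write $\log_k^{\circ i-1}(n^\gamma) = x + D_{i-1}(n)$ where $D_{i-1}(n)$ is bounded. Then
\[D_i(n) = \log_k\!\bigl(x + D_{i-1}(n)\bigr) - \log_k(x) = \log_k\!\Bigl(1 + \tfrac{D_{i-1}(n)}{x}\Bigr) \longrightarrow 0,\]
since $D_{i-1}(n)/x \to 0$. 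Combined with $\log_k^{\circ i}(n) \to \infty$, this yields
\[\frac{\log_k^{\circ i}(n^\gamma)}{\log_k^{\circ i}(n)} = 1 + \frac{D_i(n)}{\log_k^{\circ i}(n)} \longrightarrow 1.\]

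There is no real obstacle here; the only mild subtlety is being careful with the two regimes (difference is a nonzero constant for $i=2$, but vanishes for $i \geq 3$), which is why the statement bundles both under the single limit value $1$ when $i > 1$. The entire argument is essentially a two-line calculation plus an elementary induction, and does not require any machinery beyond the definition of iterated logarithm and the continuity of $\log_k$.
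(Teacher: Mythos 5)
Your proof is correct, but it takes a genuinely different route from the paper's. The paper handles $i\geq 2$ by induction combined with L'H\^{o}pital's rule: it computes $\frac{d}{dn}\log_k^{\circ i}(n^\lambda) = \lambda\big/\big(n\prod_{j=1}^{i-1}\log_k^{\circ j}(n^\lambda)\big)$ and, after differentiating numerator and denominator, reduces the level-$i$ ratio to a product of lower-level ratios $\log_k^{\circ j}(n)/\log_k^{\circ j}(n^\gamma)$, each tending to $1$. You instead argue additively: $\log_k(n^\gamma)=\gamma\log_k(n)$ settles $i=1$ exactly, the $i=2$ discrepancy is the constant $\log_k\gamma$, and for $i\geq 3$ you show the difference $D_i(n)=\log_k^{\circ i}(n^\gamma)-\log_k^{\circ i}(n)=\log_k\bigl(1+D_{i-1}(n)/\log_k^{\circ i-1}(n)\bigr)\to 0$, then divide by $\log_k^{\circ i}(n)\to\infty$. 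Your version is more elementary (only the functional equation of the logarithm and its continuity, no differentiation) and gives slightly sharper information, namely the exact size of the discrepancy at each level (a constant at $i=2$, $o(1)$ beyond); the paper's calculus argument is quicker to state once the derivative formula is accepted, but relies on L'H\^{o}pital's hypotheses and on the convergence of each factor in the resulting product. Either argument proves the lemma; yours would be a perfectly acceptable substitute.
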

\begin{proof}
When $i=1$ we have $\lim\limits_{n\to \infty} \frac{\log_k^{}( n^\gamma)}{\log_k^{}( n)} =\gamma \lim\limits_{n\to \infty} \frac{\log_k^{}( n)}{\log_k^{}( n)}=\gamma$.

The proof is by induction on $i$. Since we will use L'H\^{o}pital's rule to evaluate the limit, we first compute the derivative of $\log_k^{\circ i}( n^\lambda)$ with respect to $n$ for any constant $\lambda >0$. We have \[\frac{d}{dn}\log_k^{\circ i}( n^\lambda) = \frac{\lambda}{n\prod\limits_{j=1}^{i-1}\log_k^{\circ j}(n^\lambda )}.\]
 In the base case, when $i=2$, we have
 \begin{align*}
     \lim_{n\to \infty} \frac{\log_k^{\circ 2}( n^\gamma)}{\log_k^{\circ 2}(n)} &= \lim_{n\to \infty} \frac{\frac{\gamma}{n \log_k(n^\gamma )}}{\frac{1}{n \log_k(n )}} = 1.
 \end{align*}
 Suppose $i>2$. Then we have
  \begin{align*}
     \lim_{n\to \infty} \frac{\log_k^{\circ i}( n^\gamma)}{\log_k^{\circ i}(n)} &= \lim_{n\to \infty} \frac{\frac{\gamma}{n\prod\limits_{j=1}^{i-1}\log_k^{\circ j}(n^\gamma )}}{\frac{1}{n\prod\limits_{j=1}^{i-1}\log_k^{\circ j}(n)}} =\lim_{n\to \infty} \frac{ {\prod\limits_{j=2}^{i-1}\log_k^{\circ j}(n)}}{\prod\limits_{j=2}^{i-1}\log_k^{\circ j}(n^\gamma )} = 1.
 \end{align*}
\end{proof}


\section{Closed words}

\subsection{Lower bound}

We first state a useful lemma from a paper of Nicholson and Rampersad~\cite{Nicholson2018}.

\begin{lemma}[Nicholson and Rampersad~\cite{Nicholson2018}]\label{lemma:nich&ramp}
Let $k\geq 2$ be an integer. For every $n$, there is a unique integer $t$ such that \[\frac{\ln k}{k-1}k^t\leq n-t<  \frac{\ln k}{k-1}k^{t+1}.\] Let $u$ be a length-$t$ word. There exist constants $N_0$ and $d$ such that for $n-t> N_0$ we have \[B_k(n,u) \geq d\frac{k^n}{n^2}.\]
\end{lemma}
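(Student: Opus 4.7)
The plan is to reduce $B_k(n, u)$ to an ``avoidance-plus-boundary'' count on length-$(n-2t)$ words and then bound this count uniformly over all length-$t$ words $u$. Writing $w = uxu$ with $x \in \Sigma_k^{n-2t}$, the requirement that $u$ occur exactly twice in $w$ decomposes into three conditions: (a) $u$ is not a factor of $x$; (b) $x$ realizes no cross-occurrence of $u$ with the prefix copy of $u$; (c) symmetrically at the suffix. The classical autocorrelation analysis identifies these cross-occurrences: each proper border of $u$ of length $s$ forces one ``dangerous'' prefix and one ``dangerous'' suffix of $x$, each of length $t - s$ and explicitly given as a specific factor of $u$.

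Next, I would obtain a uniform lower bound on the bulk count $A_k(n-2t, u)$. The Guibas--Odlyzko estimates $\theta_u \geq k - 1/k^{t-1}$ and $C_u = 1 + o(1)$ hold for every length-$t$ word $u$, so together with $k^t = \Theta(n)$ in the stated regime they yield $A_k(n-2t, u) \geq c_2 k^{n-2t}$ for a constant $c_2 = c_2(k) > 0$ independent of $u$; since $k^{n-2t} = \Theta(k^n/n^2)$, this immediately settles the unbordered case, where conditions (b) and (c) are vacuous under (a) (a bad prefix would exhibit $u$ as a factor of $x$). For bordered $u$, let $p := t - s_1$ be the smallest period of $u$, where $s_1$ is the longest border. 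When $p$ exceeds a threshold $P_k$ depending only on $k$, a geometric estimate on the border lengths bounds the bad-boundary probability: $\Pr[\text{bad prefix or bad suffix}] \leq 2/(k^p - 1) + o(1) \leq c_2/2$, and the union bound yields $B_k(n, u) \geq (c_2/2) k^{n-2t}$.

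When $p \leq P_k$, the word $u$ is highly periodic and belongs to one of only finitely many families, indexed by its length-$p$ pattern and by $t \bmod p$. For each such family I would invoke Perron--Frobenius on the Knuth--Morris--Pratt avoidance automaton of $u$: the principal eigenvalue is $\theta_u = k - O(1/k^p)$, the strictly positive principal eigenvector assigns uniformly positive mass to the ``safe'' boundary states, and the spectral gap (bounded away from $0$ for each fixed small $p$) guarantees asymptotic independence of the two boundary events once $n - 2t$ is large, yielding $B_k(n, u) \geq c_3 k^{n-2t}$ for some $c_3 > 0$. The main obstacle is to make $c_3$ uniform in $t$ and across the finitely many highly-periodic shapes of $u$; this follows because for periodic $u$ the KMP failure function satisfies $\mathrm{fail}(j) = j - p$ for $j \geq p$, so the transfer matrix stabilizes around a ``recurrent block'' of size $p$ as $t$ grows, the relevant principal eigenvector entries at the boundary states converge, and a minimum can be taken over the finitely many patterns and $t$-residues to obtain the required uniform constant $d$.
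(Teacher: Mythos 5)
The paper does not actually prove this lemma; it is imported as a black box from Nicholson and Rampersad, so there is no internal proof to compare against and your direct combinatorial argument must stand on its own. Most of it does. In the stated regime $k^t=\Theta(n-t)$ we have $n-2t\gg t$, so the decomposition of $w=uxu$ into the bulk condition ($u$ not a factor of $x$) plus the two boundary conditions is exhaustive, and your identification of the dangerous prefixes/suffixes (one per border, of length $t-s$, equal to the corresponding suffix or prefix of $u$) is correct. The uniform bulk bound $A_k(n-2t,u)\ge c_2k^{n-2t}=\Theta(k^n/n^2)$ from $\theta_u\ge k-O(k^{-t+1})$ and $k^t=\Theta(n)$ is fine, and it settles the unbordered case (though your parenthetical is off: the boundary conditions are vacuous there simply because $u$ has no borders, not ``under (a)''). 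The large-period case also goes through, but your bound $2/(k^p-1)$ on the boundary-collision fraction is not justified and can fail: the periods of $u$ need not form a near-geometric sequence (e.g.\ {\tt aabaa} has periods $3$ and $4$, and $k^{-3}+k^{-4}>1/(k^3-1)$). What is true, and suffices, is that the periods are distinct integers $\ge p$, giving the bound $2k^{1-p}/(k-1)$, which drops below $c_2/2$ once $p$ exceeds a constant $P_k$.

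The genuine gap is the highly periodic case $p\le P_k$, which is precisely where the union bound breaks down (it includes $u=0^t$, for which each boundary event has probability about $1/k$), and which you dispose of in one sentence. The Perron--Frobenius route is viable, but ``the spectral gap is bounded away from $0$ for each fixed small $p$'' does not parse as written: the avoidance automaton has $t+1$ states and $t\to\infty$, so fixing $p$ still leaves an infinite family of matrices of unbounded size, and neither a uniform spectral gap, nor a uniform error constant in the Perron--Frobenius asymptotics, nor a uniform lower bound on the relevant eigenvector entries (the right-eigenvector mass at the starting state $\mathrm{fail}(t)=t-p$ and the left-eigenvector mass on the ``safe'' terminal states) follows from Perron--Frobenius alone. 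Your proposed repair---that $\mathrm{fail}(j)=j-p$ for $j\ge p$ forces the transfer matrix to stabilize around a recurrent block so the boundary entries converge---is the right idea but is asserted rather than proved; making it precise (say, by quotienting the automaton by the period structure of $u=v^ev'$, or by an explicit renewal computation) is the actual content of this case. As it stands, your argument establishes the lemma only for words whose shortest period exceeds the constant $P_k$.
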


We now use the previous lemma to prove Theorem~\ref{theorem:mainC}~\ref{theorem:mainCLower}.
\begin{proof}[Proof of Theorem~\ref{theorem:mainC}~\ref{theorem:mainCLower}]
The number $C_k(n,t)$ of length-$n$ words closed by a length-$t$ word is clearly equal to the sum, over all length-$t$ words $u$, of the number $B_k(n,u)$ of length-$n$ words closed by $u$. Thus we have that \[C_k(n,t) = \sum_{|u| = t}B_k(n,u).\] Let $t$ be such that $t= \lfloor \log_k(n-t) + \log_k(k-1) - \log_k(\ln k )\rfloor$. By Lemma~\ref{lemma:nich&ramp} there exist constants $N_0$ and $d$ such that for $n-t > N_0$ we have $B_k(n,u) \geq d k^n/n^2$. Clearly $t \leq \log_k(n)+1$ for all $n\geq 1$. Since $t$ is asymptotically much smaller than $n$, there exists a constant $N> N_0$ such that $n-t > N_0$ for all $n>N$. Thus for $n>N$ we have
\begin{align*}
    C_k(n) &\geq C_k(n,t) = \sum_{|u| = t}B_k(n,u) \geq \sum_{|u|=t}d\frac{k^n}{n^2} = k^{t}\bigg(d  \frac{k^n}{n^2}\bigg)  \\
           &=d k^{\lfloor \log_k(n-t) + \log_k(k-1) - \log_k(\ln k )\rfloor} \frac{k^n}{n^2} \geq d_0k^{\log_k(n-t) + \log_k(k-1) - \log_k(\ln k )}  \frac{k^n}{n^2} \\
           &\geq  d_1(n-t)\frac{k^n}{n^2} \geq d_1(n-\log_k(n)-1)\frac{k^n}{n^2}\geq c\frac{k^n}{n}
\end{align*}
for some constant $c>0$.
\end{proof}

\subsection{Upper bound}

Before we proceed with upper bounding $C_k(n)$, we briefly outline the direction of the proof. First, we begin by bounding $C_k(n,t)$ for $t< n/2$ and $t\geq n/2$. We show that for $t< n/2$, the number of length-$n$ words closed by a particular length-$t$ word~$u$ is bounded by the number of words of length $n-2t$ that do not have $0^t$ as a factor. For $t\geq n/2$ we prove that $C_k(n,t)$ is negligibly small. Next, we prove upper bounds on the number of words that do not have $0^t$ as a factor, allowing us to finally bound $C_k(n)$.

\begin{lemma}\label{lemma:BboundA}
Let $n$, $t$, and $k$ be integers such that $n\geq 2t\geq 2$ and $k\geq 2$. Let $u$ be a length-$t$ word. Then \[B_k(n,u) \leq A_k(n-2t,0^t).\]
\end{lemma}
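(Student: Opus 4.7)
The plan is to prove the bound in two steps: first reduce to a count of length-$(n-2t)$ words avoiding $u$, and then invoke the classical fact that among all length-$t$ patterns, $0^t$ maximizes the number of length-$m$ words avoiding it.

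First I would observe that since $n \geq 2t$, the occurrence of $u$ as a prefix and as a suffix of any word~$w$ closed by $u$ cannot overlap, so $w$ admits a unique decomposition $w = umu$ with $|m| = n-2t$. The assignment $w \mapsto m$ is manifestly injective, so it suffices to bound the size of its image. I claim that every such $m$ avoids $u$ as a factor. When $n < 3t$ this is vacuous since $|m| < t$. When $n \geq 3t$, an occurrence of $u$ starting at position $j \in [1,\, n-3t+1]$ of $m$ would correspond to an occurrence of $u$ in $w$ starting at position $t+j$, which lies in $[t+1,\, n-2t+1] \subseteq [2,\, n-t]$; this would be a third occurrence of $u$ in $w$, contradicting the fact that $w$ is closed by $u$. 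Hence $B_k(n,u) \leq A_k(n-2t, u)$.

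For the second step I would invoke the standard result of Guibas and Odlyzko that, among all length-$t$ words~$v$, the pattern $v = 0^t$ maximizes $A_k(m, v)$ for every $m \geq 0$. The intuition is that $0^t$ has the maximal possible auto-correlation $1^t$, and the larger the value of the auto-correlation polynomial $f_{a(v)}(k)$, the easier the pattern is to avoid. Applying this with $m = n-2t$ gives $A_k(n-2t, u) \leq A_k(n-2t, 0^t)$, which combined with the first step yields the lemma.

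The main obstacle is really only the second step: the auto-correlation comparison is a known but somewhat technical result, and if it is not cited directly one would need to establish it via a generating-function or coupling argument. The first step is a routine decomposition, and the edge range $2t \leq n < 3t$ is handled by the trivial observation that $A_k(n-2t, 0^t) = k^{n-2t}$ whenever $|m| < t$.
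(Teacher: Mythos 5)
Your proposal is correct and follows essentially the same route as the paper: decompose each word closed by $u$ as $u m u$ with $m$ avoiding $u$ (giving $B_k(n,u)\leq A_k(n-2t,u)$), then apply the Guibas--Odlyzko auto-correlation comparison to conclude $A_k(n-2t,u)\leq A_k(n-2t,0^t)$, since $0^t$ has the maximal auto-correlation $1^t$. Your treatment of the range $2t\leq n<3t$ and the explicit injectivity remark are just slightly more detailed versions of the same argument.
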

\begin{proof}
Recall that $B_k(n,u)$ is the number of length-$n$ words that are closed by the word~$u$. Also recall that $A_k(n,u)$ is the number of length-$n$ words that do not contain the word $u$ as a factor.

 Let $w$ be a length-$n$ word closed by $u$ where $|w| = n \geq 2t = 2|u|$. Then we can write $w=uvu$ where $v$ does not contain $u$ as a factor. This immediately implies that $B_k(n,u) \leq A_k(n-2t,u)$. But from a result of Guibas and Odlyzko~\cite[Section 7]{Guibas&Odlyzko:1981}, we have that if $f_{a(u)}(2) > f_{a(v)}(2)$ for words $u$, $v$, then $A_k(m,u) \geq A_k(m,v)$ for all $m\geq 1$. The auto-correlation polynomial only has $0$ or $1$ as coefficients, depending on the $1$'s and $0$'s in the auto-correlation. Thus, the auto-correlation $p$ that maximizes $f_p(2)$ is clearly $p=1^t$. The words that achieve this auto-correlation are words of the form $a^t$ where $a\in \Sigma_k$. Therefore we have 
 \[B_k(n,u) \leq A_k(n-2t,u) \leq A_k(n-2t,0^t).\]
\end{proof}

\begin{lemma}\label{lemma:Cntbound}
Let $n$, $t$, and $k$ be integers such that $n\geq 2t\geq 2$ and $k\geq 2$. Then
\[C_k(n,t) \leq k^t A_k(n-2t,0^t).\]
\end{lemma}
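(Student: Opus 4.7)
The plan is essentially a one-line combination of the two facts already established in this subsection. The quantity $C_k(n,t)$ counts the length-$n$ words closed by some length-$t$ word $u$, and since a closed word is closed by a unique word (its largest border, as noted in the preliminaries), there is no double counting when we partition by this border. Thus
\[C_k(n,t) = \sum_{|u|=t} B_k(n,u),\]
where the sum ranges over all $k^t$ length-$t$ words over $\Sigma_k$.

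Next I would invoke Lemma~\ref{lemma:BboundA}, whose hypothesis $n\geq 2t\geq 2$ is exactly the hypothesis of the present lemma, to replace each summand by the uniform upper bound $A_k(n-2t, 0^t)$. Since this bound is independent of $u$, the sum collapses to $k^t$ copies of it, yielding
\[C_k(n,t) \;=\; \sum_{|u|=t} B_k(n,u) \;\leq\; \sum_{|u|=t} A_k(n-2t,0^t) \;=\; k^t\, A_k(n-2t,0^t),\]
which is the desired bound.

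There is no real obstacle here; the content has already been absorbed into Lemma~\ref{lemma:BboundA} (via the Guibas–Odlyzko fact that $0^t$ maximizes $A_k(m,\cdot)$ among length-$t$ forbidden patterns). The only thing worth double-checking in the write-up is that the decomposition $C_k(n,t) = \sum_{|u|=t} B_k(n,u)$ is unambiguous, which follows because a word can be closed by at most one border. The substantive work — controlling $A_k(n-2t,0^t)$ itself — is deferred to the subsequent lemmas that feed into the proof of Theorem~\ref{theorem:mainC}~\ref{theorem:mainCUpper}.
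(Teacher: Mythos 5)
Your proposal is correct and matches the paper's own proof: both decompose $C_k(n,t)$ as $\sum_{|u|=t}B_k(n,u)$ and apply Lemma~\ref{lemma:BboundA} to bound each summand uniformly by $A_k(n-2t,0^t)$, giving the factor $k^t$. No issues.
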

\begin{proof}
The number $C_k(n,t)$ of length-$n$ words closed by a length-$t$ word is equal to the sum, over all length-$t$ words $u$, of the number $B_k(n,u)$ of length-$n$ words closed by $u$. Thus we have that \[C_k(n,t) = \sum_{|u| = t}B_k(n,u).\]
By Lemma~\ref{lemma:BboundA} we have that $B_k(n,v) \leq A_k(n-2t,0^t)$ for all length-$t$ words $v$. Therefore \[C_k(n,t)=\sum_{|u| = t}B_k(n,u)  \leq  \sum_{|u|=t} A_k(n-2t, 0^t)  \leq k^tA_k(n-2t,0^t).\]

\end{proof}
\begin{corollary}\label{corollary:Cbound}
Let $n\geq 1$ and $k\geq 2$ integers. Then 
\[C_k(n) \leq \sum_{t=1}^{\lfloor n/2\rfloor} k^t A_k(n-2t,0^t) + nk^{\lceil n/2\rceil}.\]
\end{corollary}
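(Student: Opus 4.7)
The plan is to split $C_k(n) = \sum_{t=1}^{n-1} C_k(n,t)$ at the index $t = \lfloor n/2 \rfloor$, controlling the two ranges by completely different arguments. Lemma~\ref{lemma:Cntbound} requires $n \geq 2t$, so it applies cleanly to the lower range $1 \leq t \leq \lfloor n/2 \rfloor$ and immediately produces the sum $\sum_{t=1}^{\lfloor n/2\rfloor} k^t A_k(n-2t, 0^t)$ appearing in the statement. The only work is to show that the contribution from $t \geq \lfloor n/2 \rfloor + 1$ is absorbed by the crude term $n k^{\lceil n/2 \rceil}$.

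For the tail, I would use a standard periodicity observation: if a length-$n$ word $w$ has a border of length $t$, then $w_j = w_{j+(n-t)}$ for $1 \leq j \leq t = n-(n-t)$, which by definition means $w$ has period $p = n-t$. Any length-$n$ word with period $p$ is determined by its first $p$ characters, so the number of length-$n$ words with \emph{any} border of length $t$ is at most $k^{n-t}$. Since being closed by a length-$t$ word is more restrictive than having a border of length $t$, this gives the (very loose) bound
\[
C_k(n,t) \leq k^{n-t}.
\]
For $t \geq \lfloor n/2 \rfloor + 1$ we have $n-t \leq \lceil n/2 \rceil - 1$, and summing over at most $n$ such values of $t$ yields
\[
\sum_{t=\lfloor n/2\rfloor + 1}^{n-1} C_k(n,t) \leq n \, k^{\lceil n/2\rceil - 1} \leq n \, k^{\lceil n/2\rceil}.
\]
Adding the two contributions gives the claimed inequality.

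The only thing one has to be careful about is the step where Lemma~\ref{lemma:Cntbound} fails (its proof uses $n \geq 2t$), which is exactly why the split is made at $\lfloor n/2 \rfloor$; the periodicity bound takes over there. Beyond that, no calculation is delicate, and the main obstacle — bounding the large-$t$ regime without an analogue of Lemma~\ref{lemma:Cntbound} — is handled by the one-line periodicity argument above.
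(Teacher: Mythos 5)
Your proposal is correct and follows essentially the same route as the paper: split the sum $C_k(n)=\sum_t C_k(n,t)$ at $t=\lfloor n/2\rfloor$, apply Lemma~\ref{lemma:Cntbound} on the lower range, and bound the tail by noting that a border of length $t$ forces period $n-t$, so each such word is determined by its first $n-t\le\lceil n/2\rceil$ characters. Your version of the tail bound is in fact marginally sharper ($k^{n-t}$ rather than a uniform $k^{\lceil n/2\rceil}$), but the idea is identical.
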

\begin{proof}
It follows from Lemma~\ref{lemma:Cntbound} that \[C_k(n)=\sum_{t=1}^{n-1}C_k(n,t) \leq \sum_{t=1}^{\lfloor n/2\rfloor} k^t A_k(n-2t,0^t) + \sum_{t=\lfloor n/2\rfloor +1}^{n-1} C_k(n,t).\]
Now we show that \[\sum_{t=\lfloor n/2\rfloor +1}^{n-1} C_k(n,t)\leq nk^{\lceil n/2\rceil}.\]
Let $w=w_0w_1\cdots w_{n-1}$ be a word of length $n$ that is closed by a word~$u$ of length $t>\lfloor n/2\rfloor $. Then $w = ux = yu$ for some words $x$, $y$. So $w_i = w_{i+(n-t)}$ for all $i$, $0\leq i < t$. This implies that $w= v^i v'$ where $v$ is the length-$(n-t)$ prefix of $w$, $i=\lfloor n/|v|\rfloor$, and $v'$ is the length-$(n-i|v|)$ prefix of $v$. Since $t > \lfloor n/2 \rfloor$, we have that $n-t < \lceil n/2\rceil$. We see that $w$ is fully determined by the word~$v$. So since $|v| < \lceil n/2\rceil$, we have $C_k(n,t) \leq k^{\lceil n/2\rceil}$. Thus
\[\sum_{t=\lfloor n/2\rfloor +1}^{n-1} C_k(n,t)\leq  \sum_{t=\lfloor n/2\rfloor +1}^{n-1} k^{\lceil n/2\rceil} \leq nk^{\lceil n/2\rceil}.\]
\end{proof}

\begin{lemma}\label{lemma:A0t}
Let $n\geq 0$, $t\geq 1$, and $k\geq 2$ be integers. Then
\[ A_k(n,0^t) = \begin{cases} 
      k^n, & \text{if $n < t$;} \\
      (k-1)\sum\limits_{i=1}^t A_k(n-i,0^t), & \text{if $n\geq t$.}
   \end{cases}
\]
\end{lemma}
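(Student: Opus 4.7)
The proof will split on whether $n<t$ or $n\geq t$. When $n<t$, no length-$n$ word can possibly contain a length-$t$ factor, so all $k^n$ words in $\Sigma_k^n$ avoid $0^t$ vacuously, giving $A_k(n,0^t)=k^n$.

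For $n\geq t$, the plan is to derive the recurrence via a first-letter style decomposition, conditioning on the number of leading zeros. Let $w$ be a length-$n$ word avoiding $0^t$, and let $j$ be the length of its maximal all-zero prefix. Since $n\geq t$ and $0^t$ is forbidden as a factor, $j$ must satisfy $0\leq j\leq t-1$, and the $(j+1)$-st letter of $w$ is some $a\in\Sigma_k\setminus\{0\}$. Hence $w$ decomposes uniquely as $w=0^j a w'$ with $|w'|=n-j-1$. The key observation is the equivalence: $w$ avoids $0^t$ if and only if $w'$ avoids $0^t$. The forward direction is trivial, and the reverse holds because any occurrence of $0^t$ in $w$ either lies entirely within $w'$ or must straddle the separator $a$; since $a\neq 0$ and the leading block has length $j\leq t-1$, the straddling case is impossible.

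This decomposition is a bijection between length-$n$ words avoiding $0^t$ and triples $(j,a,w')$ with $0\leq j\leq t-1$, $a\in\Sigma_k\setminus\{0\}$, and $w'$ a length-$(n-j-1)$ word avoiding $0^t$. Summing over the $t$ choices of $j$ and $k-1$ choices of $a$ therefore gives
\[A_k(n,0^t)=\sum_{j=0}^{t-1}(k-1)A_k(n-j-1,0^t)=(k-1)\sum_{i=1}^{t}A_k(n-i,0^t),\]
after the reindexing $i=j+1$. The only step of any real substance is the equivalence in the previous paragraph, and this is completely routine; I do not anticipate any genuine obstacle, as this kind of leading-letter decomposition is standard for pattern-avoidance recurrences.
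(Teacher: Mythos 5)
Your proof is correct and is essentially the same as the paper's argument, which peels off the maximal block of trailing zeros and the preceding nonzero letter (writing $w=w'b0^i$) rather than the leading zeros and following nonzero letter as you do; the two decompositions are mirror images of each other. All the key points (the bound $j\leq t-1$, the nonzero separator blocking any straddling occurrence of $0^t$) match the paper's reasoning.
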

\begin{proof}
If $n<t$, then any length-$n$ word is shorter than $0^t$, and thus cannot contain $0^t$ as a factor. So $A_k(n,0^t)=k^n$.

Suppose $n\geq t$. Let $w$ be a length-$n$ word that does not contain $0^t$ as a factor. Let us look at the symbols that $w$ ends in. Since $w$ does not contain $0^t$, we have that $w$ ends in anywhere from $0$ to $t-1$ zeroes. So $w$ is of the form $w = w' b 0^i$ where $i$ is an integer with $0\leq i \leq t-1$, $b\in \Sigma_k-\{0\}$, and $w'$ is a length-$(n-i-1)$ word that does not contain $0^t$ as a factor. There are $k-1$ choices for $b$, and $A_k(n-i-1,0^t)$ choices for $w'$. So there are $(k-1)A_k(n-i-1,0^t)$ words of the form $w'b0^i$. Summing over all possible $i$ gives 
\[A_k(n,0^t)=(k-1)\sum_{i=1}^t A_k(n-i,0^t).\]
\end{proof}

\begin{corollary}
Let $n\geq 0$, $t\geq 1$, and $k\geq 2$ be integers. Then
\[ A_k(n,0^t) = \begin{cases} 
      k^n, & \text{if $n < t$;} \\
      k^n-1, & \text{if $n=t$;}\\
      kA_k(n-1,0^t) - (k-1)A_k(n-t-1,0^t), & \text{if $n> t$.}
   \end{cases}
\]
\end{corollary}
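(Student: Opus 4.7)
The plan is to derive each of the three cases directly from Lemma~\ref{lemma:A0t}. The first case $n<t$ is already the first clause of that lemma, so it requires no work at all.

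For the boundary case $n=t$, I would invoke the second clause of Lemma~\ref{lemma:A0t} to write
\[ A_k(t,0^t) = (k-1)\sum_{i=1}^t A_k(t-i,0^t). \]
Every index $t-i$ on the right-hand side is strictly less than $t$ (with $A_k(0,0^t)=1=k^0$), so by the first clause of the lemma we may replace each term by $k^{t-i}$. The sum then collapses to a geometric series $(k-1)\sum_{i=1}^t k^{t-i} = k^t-1$, giving the claimed value.

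The interesting case is $n>t$, and here the plan is to apply Lemma~\ref{lemma:A0t} at both $n$ and $n-1$ and take an appropriate linear combination. The hypothesis $n>t$ is exactly what lets us invoke the recurrence clause at index $n-1$ (since it forces $n-1\geq t$). Reindexing the identity at $n-1$ by $j=i+1$, I would write
\[ A_k(n,0^t) = (k-1)\sum_{i=1}^t A_k(n-i,0^t) \quad\text{and}\quad A_k(n-1,0^t) = (k-1)\sum_{j=2}^{t+1} A_k(n-j,0^t). \]
The two sums share every term of the form $A_k(n-i,0^t)$ with $2\leq i\leq t$, and differ only at the endpoints $i=1$ (in the first sum) and $j=t+1$ (in the second). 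Forming $kA_k(n-1,0^t)-A_k(n,0^t)$, one checks that the $(k-1)$-weighted matching interior terms on each side cancel against one $A_k$-copy pulled from $kA_k(n-1,0^t)=(k-1)A_k(n-1,0^t)+A_k(n-1,0^t)$, and the only surviving residue is $(k-1)A_k(n-t-1,0^t)$. Rearranging yields exactly the claimed recurrence $A_k(n,0^t)=kA_k(n-1,0^t)-(k-1)A_k(n-t-1,0^t)$.

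No real obstacle is anticipated: the argument is a direct manipulation of two finite sums. The one point that genuinely requires care is ensuring that both applications of Lemma~\ref{lemma:A0t} land in the ``sum'' regime rather than the ``$k^n$'' regime, and this is precisely what the hypothesis $n>t$ secures.
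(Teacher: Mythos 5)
Your proposal is correct and matches the paper's argument: the paper likewise obtains the $n=t$ case by summing the recurrence of Lemma~\ref{lemma:A0t} with the $k^{t-i}$ values and the $n>t$ case by computing $A_k(n,0^t)-A_k(n-1,0^t)$ so that the interior terms of the two sums cancel, leaving $(k-1)A_k(n-1,0^t)-(k-1)A_k(n-t-1,0^t)$. Your reindexed-difference manipulation is the same telescoping, just organized slightly differently, so there is nothing to add.
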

\begin{proof}
Compute $A_k(n,0^t)-A_k(n-1,0^t)$ with the recurrence from Lemma~\ref{lemma:A0t} and the result follows.
\end{proof}

\begin{corollary}\label{corollary:recUpp}
Let $n\geq 0$, $t\geq 1$, and $k\geq 2$ be integers. Then
\[ A_k(n,0^t) = \begin{cases} 
      k^n, & \text{if $n < t$;} \\
      k^{n-t}(k^t -1)-(n-t)k^{n-t-1}(k-1), & \text{if $t \leq n \leq 2t$;}\\
      kA_k(n-1,0^t) - (k-1)A_k(n-t-1,0^t), & \text{if $n> 2t$.}
   \end{cases}
\]
\end{corollary}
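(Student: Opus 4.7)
The plan is to observe that the only genuinely new content in this corollary is the closed-form expression in the middle range $t \leq n \leq 2t$, since the first case ($n<t$) is the definitional one already established in Lemma~\ref{lemma:A0t} and the third case ($n>2t$) is a restriction of the recurrence from the previous corollary to larger $n$. So the task reduces to proving
\[A_k(n,0^t) = k^{n-t}(k^t - 1) - (n-t)k^{n-t-1}(k-1)\]
for every integer $n$ with $t \leq n \leq 2t$.

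I would do this by induction on $n$, using the recurrence $A_k(n,0^t) = kA_k(n-1,0^t) - (k-1)A_k(n-t-1,0^t)$ from the previous corollary as the inductive step. The base case $n=t$ is immediate: the formula collapses to $k^0(k^t-1) - 0 = k^t - 1$, which matches the $n=t$ line of the previous corollary. For the inductive step, take $t < n \leq 2t$ and note the key observation that $n - t - 1 < t$, so Lemma~\ref{lemma:A0t} gives $A_k(n-t-1,0^t) = k^{n-t-1}$ without any recursive unfolding. Substituting the inductive hypothesis for $A_k(n-1,0^t)$ and this trivial value for $A_k(n-t-1,0^t)$ into the recurrence yields
\[k\bigl[k^{n-1-t}(k^t-1) - (n-1-t)k^{n-t-2}(k-1)\bigr] - (k-1)k^{n-t-1},\]
which simplifies to $k^{n-t}(k^t-1) - (n-t)k^{n-t-1}(k-1)$, closing the induction.

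There is essentially no obstacle here — the proof is a short bookkeeping argument. The only subtle point worth flagging is that the recurrence applies when $n > t$, so the argument genuinely starts from $n = t$ as a base case (rather than from $n = t$ being treated by the recurrence); and one must check that $n - t - 1 \geq 0$ for $n > t$ so that Lemma~\ref{lemma:A0t} can be invoked. Both are satisfied for $t < n \leq 2t$, so no extra case analysis is needed.
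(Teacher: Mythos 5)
Your proposal is correct and follows essentially the same route as the paper: induction on $n$ over the range $t \leq n \leq 2t$, with base case $n=t$ given by the previous corollary, and the inductive step obtained by substituting the induction hypothesis for $A_k(n-1,0^t)$ and the value $A_k(n-t-1,0^t)=k^{n-t-1}$ (valid since $n-t-1<t$) into the recurrence. Your extra remarks about the first and third cases and about $n-t-1\geq 0$ are accurate bookkeeping that the paper leaves implicit.
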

\begin{proof}
We prove $t\leq n \leq 2t$ by induction on $n$. In the base case, when $n=t$, we have $k^t-1 = A_k(t,0^t) = k^{t-t}(k^t-1) - (t-t)k^{t-t-1}(k-1) = k^t-1$. 

Suppose $t < n \leq 2t$. Then \begin{align}
    A_k(n,0^t) &= kA_k(n-1,0^t) - (k-1)A_k(n-t-1,0^t) \nonumber \\
               &= k(k^{n-1-t}(k^t-1) - (n-1-t)k^{n-t-2}(k-1)) - (k-1)k^{n-t-1}\nonumber \\
               &=k^{n-t}(k^t-1) - (n-t)k^{n-t-1}(k-1).\nonumber
\end{align} 
\end{proof}

Since $(A_k(n,0^{t}))_n$ satisfies a linear recurrence, we know that the asymptotic behaviour of $A_k(n,0^t)$ is determined by the root of maximum modulus of the polynomial $x^{t+1}-kx^t+k-1=0$. We use this fact to find an upper bound for $A_k(n,0^t)$.

\begin{lemma}
\label{theorem:betaUpper}
Let $t\geq 1$ and $k\geq 2$ be integers. Let \[\beta_k(t) = k-(k-1)k^{-t-1}.\] Then $\beta_k(t) \geq k-(k-1)\beta_k(t)^{-t}$.
\end{lemma}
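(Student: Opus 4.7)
My plan is to reduce the target inequality to an elementary polynomial bound and then read it off the definition of $\beta_k(t)$. The first step is to verify positivity: since $k\geq 2$ and $t\geq 1$, the correction term satisfies $(k-1)k^{-t-1}\leq (k-1)/k^2 < 1$, and hence $k-1 < \beta_k(t) < k$. This positivity will let us freely take $t$-th powers and reciprocals below without worrying about sign issues.

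Next I would rearrange. Directly from the definition we have $k-\beta_k(t) = (k-1)k^{-t-1}$, so the inequality $\beta_k(t)\geq k-(k-1)\beta_k(t)^{-t}$ is equivalent to
\[(k-1)k^{-t-1} \;=\; k-\beta_k(t) \;\leq\; (k-1)\beta_k(t)^{-t}.\]
Dividing by $k-1>0$ and inverting the two positive quantities, the claim reduces to the single inequality $\beta_k(t)^t \leq k^{t+1}$.

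Finally, since $\beta_k(t) < k$, raising both positive sides to the $t$-th power gives $\beta_k(t)^t < k^t \leq k^{t+1}$, and we are done. The whole argument is a short chain of equivalences ending in the trivial bound $\beta_k(t) < k$, so there is no real obstacle; the only thing to be careful about is the direction of the inequality when inverting, which is handled by the positivity check in the first step.
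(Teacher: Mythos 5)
Your proof is correct and is essentially the paper's argument: the paper likewise uses $\beta_k(t)\leq k$ to get $\beta_k(t)^{-t}\geq k^{-t}\geq k^{-t-1}$ and hence $(k-1)\beta_k(t)^{-t}\geq (k-1)k^{-t-1}=k-\beta_k(t)$, which is just your chain of equivalences written with reciprocals instead of the bound $\beta_k(t)^t\leq k^{t+1}$. No issues to flag.
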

\begin{proof}
Since $\beta_k(t) \leq k$, we have that $\beta_k(t)^{-t} \geq k^{-t}\geq {k^{-t-1}}$. This implies that 
\[\beta_k(t) = k-(k-1)k^{-t-1} \geq k-(k-1)\beta_k(t)^{-t}.\]
\end{proof}

\begin{lemma}
\label{lemma:baseCaseUpper}
Let $k,t\geq 2$ be integers. Let $n$ be an integer such that $2t \leq n\leq 3t$. Then $A_k(n,0^t) \leq \beta_k(t)^n$.
\end{lemma}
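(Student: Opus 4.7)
The plan is to compare $A_k(n,0^t)$ directly to a lower bound on $\beta_k(t)^n$ obtained from Lemma~\ref{lemma:binomial}. Write $\beta_k(t) = k-\gamma$ where $\gamma = (k-1)k^{-t-1}$. For $n \leq 3t$ and $k,t\geq 2$ one has $n\gamma \leq 3t(k-1)/k^{t+1} \leq 1$, so the hypothesis $\gamma \leq 6/n$ of Lemma~\ref{lemma:binomial} (applied with exponent $n$) is comfortably satisfied, and the lower-bound half of that lemma gives
\[\beta_k(t)^n \geq k^n - n(k-1)k^{n-t-2}.\]
Hence it suffices to prove the sharper explicit inequality $A_k(n,0^t) \leq k^n - n(k-1)k^{n-t-2}$ for $2t \leq n \leq 3t$.

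To obtain a workable formula for $A_k(n,0^t)$ in this range, I would iterate the recurrence from Corollary~\ref{corollary:recUpp}. Writing $n = 2t+s$ with $0 \leq s \leq t$ and unfolding the recurrence down to index $2t$ yields
\[A_k(2t+s,0^t) = k^s A_k(2t,0^t) - (k-1)\sum_{i=0}^{s-1}k^{s-1-i}A_k(t+i,0^t).\]
Each index $2t$ and $t+i$ with $0 \leq i \leq s-1 \leq t-1$ lies in the interval $[t,2t]$, where Corollary~\ref{corollary:recUpp} provides the closed form $A_k(m,0^t) = k^{m-t}(k^t-1) - (m-t)(k-1)k^{m-t-1}$. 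Substituting these in and simplifying---this is the only substantially computational step---I expect to arrive at the compact expression
\[A_k(n,0^t) = k^n - k^{n-t} - (k-1)(n-t)k^{n-t-1} + (k-1)(n-2t)k^{n-2t-1} + \tfrac{1}{2}(k-1)^2(n-2t)(n-2t-1)k^{n-2t-2}.\]

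With this formula in hand, the remaining step is algebraic: after factoring out $k^{n-2t-2}$ and setting $s = n-2t \in [0,t]$, the desired inequality $k^n - A_k(n,0^t) \geq n(k-1)k^{n-t-2}$ reduces to
\[k^{t+2} + (k-1)k^t\bigl[t(k-2) + s(k-1)\bigr] \geq (k-1)sk + \tfrac{1}{2}(k-1)^2 s(s-1).\]
The left side is at least $k^{t+2}$, while the right side is bounded above by $(k-1)tk + \tfrac{1}{2}(k-1)^2 t(t-1)$, a polynomial in $t$ that is dominated by the exponential $k^{t+2}$ for all $k,t \geq 2$; the tightest case $k=t=2$ is easily verified by direct calculation.

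The main obstacle is the algebraic bookkeeping in the second step, since one has to telescope the double sum carefully and cancel the $k^{s-1}$, $k^t$, and $k^{t+s}$ terms to land on the clean closed form shown; once that is in hand, the comparison with $\beta_k(t)^n$ via Lemma~\ref{lemma:binomial} is routine.
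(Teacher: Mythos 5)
Your argument is correct, and it reaches the bound by a genuinely different route than the paper. The paper proves the lemma by induction on $n$ over the range $2t\le n\le 3t$: the base case $n=2t$ uses the closed form from Corollary~\ref{corollary:recUpp} together with the lower-bound half of Lemma~\ref{lemma:binomial}, and the inductive step applies the recurrence once and then needs the \emph{upper}-bound half of Lemma~\ref{lemma:binomial} to compare $\gamma(t)\beta_k(t)^{2t+i}$ with $(k-1)A_k(t+i,0^t)$. You instead unroll the recurrence completely to get an exact closed form for $A_k(n,0^t)$ on all of $[2t,3t]$ (your unfolding and the resulting formula are right; I verified them), and compare it with the estimate $\beta_k(t)^n \ge k^n - n(k-1)k^{n-t-2}$ coming from the lower-bound half of Lemma~\ref{lemma:binomial} alone. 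This buys you a proof with no induction, using only one half of Lemma~\ref{lemma:binomial}, and it actually establishes the stronger explicit inequality $A_k(n,0^t)\le k^n - n(k-1)k^{n-t-2}$; the cost is heavier algebra, which you have carried out correctly --- the reduction to $k^{t+2} + (k-1)k^t\bigl(t(k-2)+s(k-1)\bigr) \ge (k-1)sk + \tfrac12(k-1)^2 s(s-1)$ is exactly what the computation gives. Two small finishing touches: the last step should not rest on ``dominated by the exponential, check $k=t=2$,'' since you need it for every $k,t\ge 2$; a uniform one-liner is $k^{t+2}\ge k\bigl(1+(k-1)\bigr)^t \ge k\bigl(t(k-1)+\tfrac12 t(t-1)(k-1)^2\bigr) \ge (k-1)tk+\tfrac12(k-1)^2t(t-1)$. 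Similarly, the hypothesis check $n\gamma \le 3t(k-1)k^{-t-1}\le 1$ is true but deserves the explicit observation $k^{t+1}\ge 3t(k-1)$ for all $k,t\ge 2$.
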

\begin{proof}
The proof is by induction on $n$. By Corollary~\ref{corollary:recUpp} we have that \[A_k(n,0^t) = k^{n-t}(k^t -1)-(n-t)k^{n-t-1}(k-1)\] for $t \leq n \leq 2t$.

Suppose, for the base case, that $n=2t$. Let $\gamma(t) = (k-1)k^{-t-1}$. Then
\begin{align*}
    A_k(2t,0^t) &= k^{t}(k^t -1)-tk^{t-1}(k-1) = k^{2t} - k^{t-2}(k^2 + tk(k-1))\\
               &= k^{2t} - \gamma(t) k^{2t-1}\frac{(k^2 + tk(k-1))}{k-1} \\
               &\leq k^{2t} - \gamma(t) t k^{2t-1}.
\end{align*}
Clearly $\gamma(t) \leq 6/t$ for all $t\geq 2$, so $A_k(2t)\leq k^{2t} - \gamma(t) t k^{2t-1} \leq (k- \gamma(t))^{2t} = \beta_k(t)^{2t}$.

Suppose that $2t < n \leq 3t$. Furthermore let $n = 2t + i + 1$ where $i$ is an integer such that $0\leq i < t$. Notice that $A_k(n-t-1,0^t) = A_k(t+i,0^t) = k^{i}(k^t-1) - ik^{i-1}(k-1)$. Then
{\small
\begin{align*}
    A_k(2t+i+1,0^t) &= kA_k(2t+i,0^t) - (k-1)A_k(t+i,0^t)\\
                    &\leq k(k-\gamma(t))^{2t+i} - (k-1)(k^{i}(k^t-1) - ik^{i-1}(k-1)) \\
                    &= (k-\gamma(t))^{2t+i+1} + \gamma(t)(k-\gamma(t))^{2t+i}- (k-1)(k^{i}(k^t-1) - ik^{i-1}(k-1))\\
                    &= \beta_k(t)^{2t+i+1} + \gamma(t)\beta_k(t)^{2t+i}- (k-1)(k^{i}(k^t-1) - ik^{i-1}(k-1)).
\end{align*}}%
To prove the desired bound, namely that $A_k(2t+i+1,0^t) \leq \beta_k(t)^{2t+i+1}$, it is sufficient to show that $\beta_k(t)^{2t+i} \leq \gamma(t)^{-1}(k-1)(k^{i}(k^t-1) - ik^{i-1}(k-1))$. We begin by upper bounding $\beta_k(t)^{2t+i}$ with Lemma~\ref{lemma:binomial}. We have
\begin{align}
    \beta_k(t)^{2t+i} &\leq k^{2t+i} - \gamma(t)(2t+i)k^{2t+i-1}+\frac{1}{2}\gamma(t)^2(2t+i)(2t+i-1)k^{2t+i-2} \nonumber \\
                    &\leq k^{2t+i} - 2(k-1)tk^{t+i-2}+\frac{9}{2}(k-1)^2t^2k^{i-4} \nonumber\\
                    &\leq k^{2t+i+1} -(k-1)k^{2t+i} - 2(k-1)tk^{t+i-2}+\frac{9}{2}(k-1)^2t^2k^{i-4} \nonumber\\
                    &= k^{2t+i+1} -k^{t+i}\Big((k-1)k^t + 2(k-1)tk^{-2} -\frac{9}{2}(k-1)^2t^2k^{-t-4} \Big) . \label{lastLine}
\end{align}
It is easy to verify that $(k-1)k^t \geq k+t(k-1)$ and $2(k-1)tk^{-2} -\frac{9}{2}(k-1)^2t^2k^{-t-4} \geq 0$ for all $t\geq 2$. Thus, continuing from (\ref{lastLine}), we have
\begin{align*}
    \beta_k(t)^{2t+i} &\leq k^{2t+i+1} -k^{t+i}(k + t(k-1) )  \leq  k^{2t+i+1} -k^{t+i}(k + i(k-1) ) \\
                    &= \frac{k^{t+1}}{k-1}(k-1)(k^{t+i} - k^{i} - ik^{i-1}(k-1))\\
                    &= \gamma(t)^{-1} (k-1)(k^i(k^t-1) - ik^{i-1}(k-1)).
\end{align*}
\end{proof}

\begin{lemma}
\label{lemma:upperBoundA0}
Let $n$, $t$, and $k$ be integers such that $n\geq 2t \geq 4$ and $k\geq 2$. Then $A_k(n,0^t) \leq \beta_k(t)^{n}$.
\end{lemma}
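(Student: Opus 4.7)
The plan is to prove the lemma by strong induction on $n$, with the base cases $2t \leq n \leq 3t$ already handled by Lemma~\ref{lemma:baseCaseUpper}. So the remaining work is the inductive step for $n > 3t$.

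For the inductive step, since $n > 3t > 2t$, Corollary~\ref{corollary:recUpp} gives
\[A_k(n,0^t) = kA_k(n-1,0^t) - (k-1)A_k(n-t-1,0^t).\]
Both indices $n-1$ and $n-t-1$ are at least $2t$ (because $n \geq 3t+1$ forces $n-t-1 \geq 2t$, and $n-1 \geq 3t \geq 2t$), so the induction hypothesis applies to both terms. Substituting $A_k(n-1,0^t) \leq \beta_k(t)^{n-1}$ and $A_k(n-t-1,0^t) \leq \beta_k(t)^{n-t-1}$ into the recurrence yields
\[A_k(n,0^t) \leq k\beta_k(t)^{n-1} - (k-1)\beta_k(t)^{n-t-1} = \beta_k(t)^{n-t-1}\bigl(k\beta_k(t)^{t} - (k-1)\bigr).\]
So it suffices to show that $k\beta_k(t)^{t} - (k-1) \leq \beta_k(t)^{t+1}$, equivalently
\[\beta_k(t) \geq k - (k-1)\beta_k(t)^{-t},\]
which is exactly the content of Lemma~\ref{theorem:betaUpper}.

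I do not foresee a genuine obstacle here: all the hard arithmetic (the binomial estimate for $\beta_k(t)^{2t+i}$, the verification that $\beta_k(t)$ dominates the characteristic equation for the recurrence) has already been absorbed into Lemmas~\ref{theorem:betaUpper} and~\ref{lemma:baseCaseUpper}. The only subtle point to check carefully is that the indices $n-1$ and $n-t-1$ fall in the range where the induction hypothesis is available, which is why one needs the base case to extend all the way up to $3t$ rather than just $2t$: the recurrence steps back by $t+1$, and this $t$-worth of room is precisely what Lemma~\ref{lemma:baseCaseUpper} supplies. Once that bookkeeping is in place, the proof is a one-line computation followed by an appeal to Lemma~\ref{theorem:betaUpper}.
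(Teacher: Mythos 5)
There is a genuine gap in your inductive step: the three-term recurrence $A_k(n,0^t) = kA_k(n-1,0^t) - (k-1)A_k(n-t-1,0^t)$ has a \emph{negative} coefficient on $A_k(n-t-1,0^t)$, so substituting the induction hypothesis $A_k(n-t-1,0^t) \leq \beta_k(t)^{n-t-1}$ into that term moves the inequality in the wrong direction. From the hypothesis you may conclude $-(k-1)A_k(n-t-1,0^t) \geq -(k-1)\beta_k(t)^{n-t-1}$, not $\leq$, so the displayed bound $A_k(n,0^t) \leq k\beta_k(t)^{n-1} - (k-1)\beta_k(t)^{n-t-1}$ does not follow. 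To push your argument through you would need a matching \emph{lower} bound on $A_k(n-t-1,0^t)$ of order $\beta_k(t)^{n-t-1}$ (the trivial bound $(k-1)^{n-t-1}$ is far too small), and neither you nor the paper establishes such a bound. Note that this is exactly why the paper's base-case argument (Lemma~\ref{lemma:baseCaseUpper}) can use the three-term recurrence: there the subtracted term $A_k(n-t-1,0^t)$ has index at most $2t-1$, so it is evaluated \emph{exactly} by the closed form in Corollary~\ref{corollary:recUpp}, and no sign issue arises; that device is unavailable once $n > 3t$.

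The paper's inductive step instead uses the all-positive form of the recurrence from Lemma~\ref{lemma:A0t}, namely $A_k(n,0^t) = (k-1)\sum_{i=1}^{t} A_k(n-i,0^t)$, where every coefficient is positive, so the induction hypothesis may be applied term by term, giving $A_k(n,0^t) \leq (k-1)\frac{\beta_k(t)^n - \beta_k(t)^{n-t}}{\beta_k(t)-1}$; Lemma~\ref{theorem:betaUpper} then shows this is at most $\beta_k(t)^n$. If you rewrite your inductive step along these lines (keeping your base case as is), the proof goes through; as written, the step for $n > 3t$ is invalid.
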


\begin{proof}
The proof is by induction on $n$. The base case, when $2t\leq n \leq 3t$, is taken care of in Lemma~\ref{lemma:baseCaseUpper}.

Suppose $n>3t$. Then 
\[
    A_k(n,0^t) = (k-1)\sum_{i=1}^t A_k(n-i,0^t)\leq  (k-1)\sum_{i=1}^t \beta_k(t)^{n-i}= (k-1)\frac{\beta_k(t)^n - \beta_k(t)^{n-t}}{\beta_k(t) - 1}.\]
By Theorem~\ref{theorem:betaUpper}, we have that $\beta_k(t) -1 \geq (k-1)-(k-1)\beta_k(t)^{-t}$. Therefore 
\[A_k(n,0^t) \leq (k-1)\frac{\beta_k(t)^n - \beta_k(t)^{n-t}}{\beta_k(t) - 1}= \beta_k(t)^n \frac{(k-1) - (k-1)\beta_k(t)^{-t}}{\beta_k(t)-1} \leq \beta_k(t)^n.\]
\end{proof}

\begin{proof}[Proof of Theorem~\ref{theorem:mainC}~\ref{theorem:mainCUpper}]
First notice that $A_k(n,0) = (k-1)^n$, since $A_k(n,0)$ is just the number of length-$n$ words that do not contain $0$.

Let $N'$ be a positive integer such that the following inequalities hold for all $n>N'$.

\begin{align}
    C_k(n) &\leq \sum_{t=2}^{\lfloor n/2\rfloor} k^t A_k(n-2t,0^t) + kA_k(n-2,0) + nk^{\lceil n/2\rceil} \nonumber \\
           &\leq \sum_{t=2}^{\lfloor n/2\rfloor} k^t \beta_k(t)^{n-2t} + k(k-1)^{n-2} + nk^{\lceil n/2\rceil}\nonumber \\
          &\leq  \sum_{t=2}^{\lfloor n/2\rfloor} k^t \bigg(k-\frac{k-1}{k^{t+1}}\bigg)^{n-2t} + d_2\frac{k^n}{n}= k^n\sum_{t=2}^{\lfloor n/2\rfloor} \frac{1}{k^{t}} \bigg(1-\frac{k-1}{k^{t+2}}\bigg)^{n-2t} + d_2\frac{k^n}{n}\nonumber \\
          &\leq k^n\Bigg(\sum_{t=2}^{\lfloor \log_k n\rfloor} \frac{1}{k^{t}} \bigg(1-\frac{k-1}{k^{t+2}}\bigg)^{n-2t} +\sum_{t=\lfloor \log_k n\rfloor+1}^{\lfloor n/2\rfloor} \frac{1}{k^{t}}\Bigg) + d_2\frac{k^n}{n}\nonumber \\
          &\leq k^n\Bigg(\sum_{t=2}^{\lfloor \log_k n\rfloor } \frac{1}{k^{t}} \bigg(1-\frac{k-1}{k^{t+2}}\bigg)^{n-2\lfloor \log_k n\rfloor}+   \frac{d_3}{n}\Bigg) + d_2\frac{k^n}{n}\nonumber\\
          &\leq k^n\sum_{t=2}^{\lfloor \log_k n\rfloor } \frac{1}{k^{t}} \bigg(1-\frac{k-1}{k^{t+2}}\bigg)^{n/2}+   d_4\frac{k^n}{n}. \label{firstSum}
\end{align}
Now we bound the sum in~(\ref{firstSum}).
Let $h(x) = (1-(k-1)k^{-2}x)^{n/2}$. Notice that $h(x)$ is monotonically decreasing on the interval $x\in (0,1)$. So for $k^{-t-1} \leq x \leq k^{-t}$ we have that $h(x) \geq h(k^{-t})$. Thus
\[\frac{1}{k^{t}} \bigg(1-\frac{k-1}{k^{t+2}}\bigg)^{n/2}\leq \frac{k-1}{k^{t}} \bigg(1-\frac{k-1}{k^{t+2}}\bigg)^{n/2} \leq k\bigg(\bigg(\frac{1}{k^{t}}-\frac{1}{k^{t+1}}\bigg)h(k^{-t})\bigg) \leq k\int_{k^{-t-1}}^{k^{-t}} h(x) \, dx.\] Going back to (\ref{firstSum}) we have
\[C_k(n) \leq k^n\sum_{t=2}^{\lfloor \log_k n\rfloor } k\int_{k^{-t-1}}^{k^{-t}} h(x) \, dx+   d_4\frac{k^n}{n} \leq k^{n+1}\int_0^1 h(x) \, dx+   d_4\frac{k^n}{n}.\]
Evaluating and bounding the definite integral, we have
\begin{align*}
    \int_0^1 h(x) \, dx &= -\frac{k^2}{k-1}\bigg[\frac{(1-(k-1)k^{-2}x)^{n/2+1}}{n/2+1}\bigg]_{x=0}^{x=1}\\
    &= -\frac{k^2}{k-1}\bigg(\frac{(1-(k-1)k^{-2})^{n/2+1} -1}{n/2+1}\bigg) \\
    &\leq d_5\bigg(\frac{1-(1-(k-1)k^{-2})^{n/2+1}}{n/2+1}\bigg) \leq d_5\frac{1}{n/2+1} \leq \frac{d_6}{n}. \\
\end{align*}
Putting everything together, we have that
\[C_k(n) \leq k^{n+1}\int_0^1 h(x) \, dx+   d_4\frac{k^n}{n} \leq d_6\frac{k^{n+1}}{n} + d_4\frac{k^n}{n} \leq c'\frac{k^n}{n}\]
for some constant $c'>0$.
\end{proof}

\section{Privileged words}\label{section:privileged}
\subsection{Lower bound}

In this section we provide a family of lower bounds for the number of length-$n$ privileged words. We use induction to prove these bounds. The basic idea is that we start with the lower bound by Nicholson and Rampersad, and then use it to bootstrap ourselves to better and better lower bounds.

\begin{proof}[Proof of Theorem~\ref{theorem:mainP}~\ref{theorem:mainPLower}]
The proof is by induction on $j$. Let $t$ be such that $t= \lfloor \log_k(n-t) + \log_k(k-1) - \log_k(\ln k )\rfloor$. We clearly have $0\leq t \leq \log_k(n) + 1$ for all $n\geq 1$. Let $u$ be a length-$t$ privileged word. By Lemma~\ref{lemma:nich&ramp} we have that there exist constants $N_0$ and $c_0$ such that $P_k(n)\geq B_k(n,u) \geq c_0\frac{k^n}{n^2}$ for all $n > N_0$. So the base case, when $j=0$, is taken care of.

Suppose $j>0$. By induction we have that there exist constants $N_{j-1}$ and $c_{j-1}$ such that \[P_k(n)\geq c_{j-1}\frac{k^n}{n\log_k^{\circ j-1}(n)\prod_{i=1}^{j-1}\log_k^{\circ i}(n)}\] for all $n>N_{j-1}$. By Lemma~\ref{lemma:nich&ramp} we have
\begin{align*}
    P_k(n) \geq P_k(n,t) \geq \sum_{\substack{|u|=t\\u\text{ privileged}}}B_k(n,u) \geq \sum_{\substack{|u|=t\\u\text{ privileged}}}d\frac{k^n}{n^2} =dP_k(t) \frac{k^n}{n^2}.
\end{align*}
for $n>N_0$. Since $t \leq \log_k(n) +1$, we have that $\frac{1}{\log_k^{\circ i}(t)}\geq \frac{1}{\log_k^{\circ i}(\log_k(n)+1)}$ for all $i\geq 0$. Thus continuing from above we have
\begin{align*}
    P_k(n) &\geq dc_{j-1}\frac{k^t}{t\log_k^{\circ j-1}(t)\prod_{i=1}^{j-1}\log_k^{\circ i}(t)} \frac{k^n}{n^2}\geq d_7\frac{k^{\log_k(n-t) + \log_k(k-1) - \log_k(\ln k )}}{t\log_k^{\circ j-1}(t)\prod_{i=1}^{j-1}\log_k^{\circ i}(t)}\frac{k^n}{n^2} \\
    &\geq d_8\frac{1}{t\log_k^{\circ j-1}(t)\prod_{i=1}^{j-1}\log_k^{\circ i}(t)}\frac{k^n}{n}\\
    &\geq d_9\frac{1}{(\log_k(n)+1)\log_k^{\circ j-1}(\log_k(n)+1)\prod_{i=1}^{j-1}\log_k^{\circ i}(\log_k(n)+1)}\frac{k^n}{n} \\
    &\geq c_j\frac{k^n}{n\log_k^{\circ j}(n)\prod_{i=1}^{j}\log_k^{\circ i}(n)}
\end{align*}
for all $n>N_j$ where $N_j > \max(N_0, N_{j-1})$.
\end{proof}

\subsection{Upper bound}
In Theorem~\ref{theorem:mainC}~\ref{theorem:mainCUpper} we proved that $C_k(n) \in O(\frac{k^n}{n})$. Since every privileged word is also a closed word, this is also shows that $P_k(n) \in O(\frac{k^n}{n})$. This bound improves on the existing bound on privileged words but it does not show that $P_k(n)$ and $C_k(n)$ behave differently asymptotically. We show that $P_k(n)$ is much smaller than $C_k(n)$ asymptotically by proving upper bounds on $P_k(n)$ that show $P_k(n) \in o(\frac{k^n}{n})$.
\begin{lemma}\label{lemma:Pntbound}
Let $n$, $t$, and $k$ be integers such that $n\geq 2t\geq 2$ and $k\geq 2$. Then
\[P_k(n,t) \leq P_k(t)A_k(n-2t,0^t).\]
\end{lemma}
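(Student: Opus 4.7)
The plan is to adapt the proof of Lemma~\ref{lemma:Cntbound} (the analogous bound $C_k(n,t) \leq k^t A_k(n-2t,0^t)$) to the privileged setting. The key observation is that if $w$ is closed by $u$, then (as noted in the introduction) $u$ is the largest border of $w$ and is therefore uniquely determined by $w$. So each length-$n$ privileged word counted by $P_k(n,t)$ contributes to exactly one term in a sum over length-$t$ privileged words $u$. Moreover, by the recursive definition of privilege, if $w$ is closed by a privileged word $u$, then $w$ itself is privileged, so we do not need to impose any additional condition on $w$ beyond being closed by a length-$t$ privileged word.

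Hence the first step is to write
\[
P_k(n,t) \;=\; \sum_{\substack{|u|=t \\ u \text{ privileged}}} B_k(n,u).
\]
The second step is to invoke Lemma~\ref{lemma:BboundA}, which gives $B_k(n,u) \leq A_k(n-2t,0^t)$ for every length-$t$ word $u$ whenever $n \geq 2t$. Finally, since the number of length-$t$ privileged words is exactly $P_k(t)$, summing yields
\[
P_k(n,t) \;\leq\; \sum_{\substack{|u|=t \\ u \text{ privileged}}} A_k(n-2t,0^t) \;=\; P_k(t)\, A_k(n-2t,0^t),
\]
which is the claimed bound.

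There is no real obstacle here; the argument is an immediate specialization of the proof of Lemma~\ref{lemma:Cntbound}, simply restricting the index set of the sum from all $k^t$ length-$t$ words to the $P_k(t)$ privileged ones. The only point that requires a moment's thought is verifying the first displayed equality: that $P_k(n,t)$ really equals this sum (with no overcounting). This is exactly where the uniqueness of the closing border is used, together with the observation that ``closed by a privileged word'' is the same as ``privileged'' in view of the recursive definition.
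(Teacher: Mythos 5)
Your proof is correct and follows essentially the same route as the paper: decompose $P_k(n,t)$ as a sum of $B_k(n,u)$ over length-$t$ privileged words $u$, bound each term by $A_k(n-2t,0^t)$ via Lemma~\ref{lemma:BboundA}, and count the $P_k(t)$ summands. Your extra remarks justifying the decomposition (uniqueness of the closing border and the equivalence of ``closed by a privileged word'' with ``privileged'') are accurate and only make the argument more explicit than the paper's.
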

\begin{proof}
The number of length-$n$ privileged words closed by a length-$t$ privileged word is equal to the sum, over all length-$t$ privileged words $u$, of the number $B_k(n,u)$ of length-$n$ words closed by $u$. Thus we have that \[P_k(n,t) = \sum_{\substack{|u|=t\\u\text{ privileged}}}B_k(n,u).\]
By Lemma~\ref{lemma:BboundA} we have that $B_k(n,v) \leq A_k(n-2t,0^t)$ for all length-$t$ words $v$. Therefore
\[ P_k(n,t) = \sum_{\substack{|u|=t\\u\text{ privileged}}}B_k(n,u)  \leq \sum_{\substack{|u|=t\\u\text{ privileged}}}A_k(n-2t,0^t) \leq P_k(t) A_k(n-2t,0^t).\]
\end{proof}

\begin{proof}[Proof of Theorem~\ref{theorem:mainP}~\ref{theorem:mainPUpper}]
For $n\geq 2t$ we can use Lemma~\ref{lemma:Pntbound} to bound $P_k(n,t)$. But for $n < 2t$, we can use Corollary~\ref{corollary:Cbound} and the fact that $P_k(n,t) \leq C_k(n,t)$. We get
\[P_k(n) = \sum_{t=1}^{n-1} P_k(n,t) \leq \sum_{t=1}^{\lfloor n/2\rfloor} P_k(t) A_k(n-2t,0^t) + nk^{\lceil n/2\rceil}.\]
The proof is by induction on $j$. The base case, when $j=0$, is taken care of by Theorem~\ref{theorem:mainC}~\ref{theorem:mainCUpper}. 

Suppose $j>0$. Then there exist constants $N_{j-1}'$ and $c_{j-1}'$ such that \[P_k(n) \leq c_{j-1}'\frac{k^n}{n\prod_{i=1}^{j-1}\log_k^{\circ i}(n)}\] for all $n>N_{j-1}'$.
We now bound $P_k(n)$. First, we let $N_j' > N_{j-1}'$ be a constant such that the following inequalities hold for all $n>N_j'$. We have

{\small
\begin{align}
    P_k(n) &\leq \sum_{t=1}^{\lfloor n/2\rfloor} P_k(t) A_k(n-2t,0^t)  + nk^{\lceil n/2\rceil} \nonumber \\
           &\leq \sum_{t=N_{j}'+1}^{\lfloor n/2\rfloor} c_{j-1}'\frac{k^t}{t\prod_{i=1}^{j-1}\log_k^{\circ i}(t)} \beta_k(t)^{n-2t} + \sum_{t=1}^{N_{j}'} P_k(t) A_k(n-2t,0^t) + nk^{\lceil n/2\rceil} \nonumber \\
           &\leq \sum_{t=N_{j}'+1}^{\lfloor n/2\rfloor} c_{j-1}'\frac{k^t}{t\prod_{i=1}^{j-1}\log_k^{\circ i}(t)}\bigg(k-\frac{k-1}{k^{t+1}}\bigg)^{n-2t} + d_{10}\sum_{t=2}^{N_{j}'} \bigg(k-\frac{k-1}{k^{t+1}}\bigg)^{n-2t} + d_{11}\frac{k^n}{n^2} \nonumber \\
           &\leq c_{j-1}'k^n\sum_{t=N_{j}'+1}^{\lfloor n/2\rfloor} \frac{1}{k^tt\prod_{i=1}^{j-1}\log_k^{\circ i}(t)} \bigg(1-\frac{k-1}{k^{t+2}}\bigg)^{n-2t} +  d_{12}\frac{k^n}{n^2} \nonumber\\
           &\leq c_{j-1}'k^n\bigg(d_{13}\sum_{t=N_{j}'+1}^{\lfloor \log_k(n)\rfloor} \frac{1}{k^tt\prod_{i=1}^{j-1}\log_k^{\circ i}(t)} \bigg(1-\frac{k-1}{k^{t+2}}\bigg)^{n/2}\nonumber \\
           &\hspace*{2cm}+\sum_{t=\lfloor \log_k(n)\rfloor+1}^{\lfloor n/2\rfloor} \frac{1}{k^tt\prod_{i=1}^{j-1}\log_k^{\circ i}(t)} \bigg) +  d_{12}\frac{k^n}{n^2} \nonumber \\
           &\leq c_{j-1}'k^n\bigg(d_{13}\sum_{t=N_{j}'+1}^{\lfloor \log_k(n)\rfloor} \frac{1}{k^tt\prod_{i=1}^{j-1}\log_k^{\circ i}(t)} \exp{\bigg(\frac{n}{2}\ln\bigg(1-\frac{k-1}{k^{t+2}}\bigg)\bigg)}\nonumber \\
           &\hspace*{2cm}+\sum_{t=\lfloor \log_k(n)\rfloor+1}^{\infty} \frac{1}{k^tt\prod_{i=1}^{j-1}\log_k^{\circ i}(t)} \bigg) +  d_{12}\frac{k^n}{n^2}. \label{Pline}
\end{align}}%
The sum on line~(\ref{Pline}) is clearly convergent. We have 
{
\begin{align*}
\sum_{t=\lfloor \log_k(n)\rfloor+1}^{\infty}\frac{1}{k^tt\prod_{i=1}^{j-1}\log_k^{\circ i}(t)} &\leq \frac{1}{(\lfloor \log_k(n)\rfloor+1)\prod_{i=1}^{j-1}\log_k^{\circ i}(\lfloor \log_k(n)\rfloor+1)}\sum_{t=\lfloor \log_k(n)\rfloor+1}^{\infty}\frac{1}{k^t} \\
 &\leq d_{14}\frac{1}{\log_k(n)\prod_{i=1}^{j-1}\log_k^{\circ i}(\log_k(n))}\frac{1}{n} \leq d_{14}\frac{1}{n\prod_{i=1}^{j}\log_k^{\circ i}(n)}.
\end{align*}}%
 Now we upper bound the sum \[D_n= \sum_{t=N_{j}'+1}^{\lfloor \log_k(n)\rfloor} \frac{1}{k^tt\prod_{i=1}^{j-1}\log_k^{\circ i}(t)} \exp{\bigg(\frac{n}{2}\ln\bigg(1-\frac{k-1}{k^{t+2}}\bigg)\bigg)}.\] It is well-known that $\ln(1-x) \leq -x$ for $|x| < 1$. Thus, letting $\alpha= \frac{k-1}{2k^2}$, we have \[\exp{\bigg(\frac{n}{2}\ln\bigg(1-\frac{k-1}{k^{t+2}}\bigg)\bigg)} \leq \exp{\Big(-\alpha\frac{n}{k^{t}}\Big)}.\] 
We reverse the order of the series, by letting $t$ be such that $t=\lfloor \log_k(n)\rfloor -t+N_{j}'+1$. We also shift the index of the series down by $N_{j}'+1$. We have
{\footnotesize
\begin{align}
       D_n   &= \sum_{t=0}^{\lfloor \log_k(n)\rfloor-N_{j}' - 1} \frac{1}{k^{\lfloor \log_k(n)\rfloor -t}(\lfloor \log_k(n)\rfloor -t)\prod_{i=1}^{j-1}\log_k^{\circ i}(\lfloor \log_k(n)\rfloor -t)}\exp{\Big(-\alpha\frac{n}{k^{\lfloor \log_k(n)\rfloor -t}}\Big)} \nonumber \\
           &\leq d_{15}\sum_{t=0}^{\lfloor \log_k(n)\rfloor-N_{j}' - 1} \frac{k^{t}}{n(\log_k(n)-t)\prod_{i=1}^{j-1}\log_k^{\circ i}(\log_k(n) -t)} \exp{(-\alpha k^t)} \nonumber\\
           &\leq d_{15}\frac{1}{n\prod_{i=1}^{j}\log_k^{\circ i}(n)}\sum_{t=0}^{\lfloor \log_k(n)\rfloor-N_{j}' - 1} \frac{k^{t}}{\prod\limits_{i=0}^{j-1}\frac{\log_k^{\circ i}(\log_k(n) -t)}{\log_k^{\circ i+1}(n)}} \exp{(-\alpha k^t)}.\label{Dline}\\
\intertext{{\normalsize
Suppose $\beta$ is a positive constant strictly between $0$ and $1$ such that $\beta\log_k(n)$ is an integer and $\beta \log_k(n) < \lfloor \log_k(n)\rfloor-N_{j}' - 1$. If $t \leq \beta \log_k(n)$, then $\frac{\log_k^{\circ i}(\log_k(n) -t)}{\log_k^{\circ i+1}(n)} \geq\frac{\log_k^{\circ i+1}(n^{1-\beta})}{\log_k^{\circ i+1}(n)} \geq d_i'$ for some $d_i'>0$ by Lemma~\ref{lemma:logLimit}. If $t> \beta\log_k(n)$, then $\frac{\log_k^{\circ i}(\log_k(n) -t)}{\log_k^{\circ i+1}(n)} \geq \frac{\log_k^{\circ i}(N_j'+1)}{\log_k^{\circ i+1}(n)}$. We split up the sum in $D_n$ in two parts. One sum with $t\leq \beta\log_k(n)$ and one with $t> \beta\log_k(n)$. Continuing from~(\ref{Dline}) we get}}
         &\leq d_{15}\frac{1}{n\prod\limits_{i=1}^{j}\log_k^{\circ i}(n)}\bigg( \sum_{t=1}^{\beta \log_k(n)} \frac{k^{t}}{\prod\limits_{i=0}^{j-1}d_i'} \exp{(-\alpha k^t)}+\prod\limits_{i=0}^{j-1}\bigg(\frac{\log_k^{\circ i+1}(n)}{\log_k^{\circ i}(N_{j}'+1)}\bigg)\sum_{t=\beta \log_k(n)+1}^{\lfloor \log_k(n)\rfloor-N_{j}' - 1} k^{t} \exp{(-\alpha k^t)}\bigg)\nonumber \\
         &\leq d_{15}\frac{1}{n\prod\limits_{i=1}^{j}\log_k^{\circ i}(n)}\bigg( d_{16}\sum_{t=1}^{\infty } t\exp{(-\alpha t)}+d_{17}\prod\limits_{i=1}^{j}\log_k^{\circ i}(n)\sum_{t=kn^\beta}^{\infty} t \exp{(-\alpha t)}\bigg).\nonumber
\end{align}}%
The first and second sum are both clearly convergent.  It is also easy to show that both of them can be bounded by a constant multiplied by the first term. Thus, we have that \[D_n \leq d_{15}\frac{1}{n\prod_{i=1}^{j}\log_k^{\circ i}(n)}\bigg( d_{18}+d_{19}\prod\limits_{i=1}^{j}\log_k^{\circ i}(n)\frac{kn^\beta}{\exp{(\alpha k n^\beta)}}\bigg)\leq d_{20}\frac{1}{n\prod_{i=1}^{j}\log_k^{\circ i}(n)}.\] 
Putting everything together and continuing from line~(\ref{Pline}), we get
\[ P_k(n) \leq c'k^n\bigg(d_{13}D_n+d_{14}\frac{1}{n\prod_{i=1}^{j}\log_k^{\circ i}(n)}\bigg) +  d_{12}\frac{k^n}{n^2} \leq c_j'\frac{k^n}{n\prod_{i=1}^{j}\log_k^{\circ i}(n)}\]
for some constant $c_j'>0$.
\end{proof}

\section{Open problems}
We conclude by posing some open problems.

In this paper we showed that $C_k(n) \in \Theta(\frac{k^n}{n})$. In other words, we showed that $C_k(n)$ can be bounded above and below by a constant times $k^n/n$ for $n$ sufficiently large. Can we do better than this? 

\begin{open}
Does the limit \[\lim_{n\to \infty} \frac{C_k(n)n}{k^n}\]exist? If it does exist, what does the limit evaluate to? If it does not exist, evaluate  \[\liminf_{n\to \infty} \frac{C_k(n)n}{k^n} \ \ \text{ and } \ \ \limsup_{n\to \infty} \frac{C_k(n)n}{k^n}.\]\label{open:one}
\end{open}

In this paper, we also gave a family of upper and lower bounds for $P_k(n)$. But for every $j\geq 0$, the upper and lower bounds on $P_k(n)$ are asymptotically separated by a factor of $1/\log_k^{\circ j}(n)$. Let $\ell_n$ denote the smallest positive integer such that $\log_k^{\circ \ell+1}(n) \leq 1$. Let $\log_k^*(n)$ denote the product \[\prod_{j= 1}^{\ell_n} \log_k^{\circ j}(n).\] 

\begin{open}
    Is $P_k(n)\in \Theta(\frac{k^n}{n\log_k^*(n)})$?\label{open:two}
\end{open}
This problem can probably be solved by a careful analysis of the constants introduced on every step in Section~\ref{section:privileged}.

\begin{open}
     Does the limit \[\lim_{n\to \infty} \frac{P_k(n)n\log_k^*(n)}{k^n}\] exist? If it does, what does the limit evaluate to? If it does not exist, evaluate  \[\liminf_{n\to \infty} \frac{P_k(n)n\log_k^*(n)}{k^n} \ \ \text{ and } \ \ \limsup_{n\to \infty} \frac{P_k(n)n\log_k^*(n)}{k^n}.\]\label{open:three}
\end{open}

We suspect that the first limit in problem~\ref{open:one} and the first limit in problem~\ref{open:three} do not exist due to a result of Guibas and Odlyzko~\cite{Guibas&Odlyzko:1978} on prefix-synchronized codes. Every codeword in a prefix-synchronized code of length $n$ begins with the same prefix $u$ of length $p<n$. Each codeword is a prefix of a closed word of length $n+p$ that is closed by  $u$. They proved that, for $2\leq k \leq 4$, the size $M_n$ of a maximal prefix-synchronized code of length $n$ oscillates such that the limit $\lim_{n\to\infty}M_nn/k^n$ does not exist. They mention that their approach can be generalized for $k\geq 5$, but that the proof is much more complicated.

\section{Acknowledgements}
Thanks to Jeffrey Shallit for introducing me to this problem and for helpful discussions and suggestions.

\bibliographystyle{new2}

\bibliography{abbrevs,simplest}

\end{document}